\def\F{\mathbb{F}}
\def\text#1{\hbox{#1}}
\def\Z{{\mathbb Z}}
\def\R{{\mathbb R}}
\def\N{{\mathbb N}}
\def\C{{\mathbb C}}
\def\Q{{\mathbb Q}}
\def\injrad{{\rm inj rad\,}}
\def\rank{{\rm rank}}
\def\Vol{{\rm Vol}}
\def\SL{{\rm SL}}
\def\N{{\rm N}}
\def\bs{\backslash}
    \DeclareFontFamily{U}{wncy}{}
    \DeclareFontShape{U}{wncy}{m}{n}{<->wncyr10}{}
    \DeclareSymbolFont{mcy}{U}{wncy}{m}{n}
    \DeclareMathSymbol{\Sh}{\mathord}{mcy}{"58}
\newtheorem{theorem}{Theorem}[section]
\newtheorem{lemma}[theorem]{Lemma}
\newtheorem{corollary}[theorem]{Corollary}
\newtheorem{proposition}[theorem]{Proposition}
\newtheorem{remark}[theorem]{Remark}
\def\bs{\backslash}
\newcommand{\showcomments}{yes}
\numberwithin{theorem}{section}
\newsavebox{\commentbox}
\title{Growth of mod$-2$ homology in higher rank locally symmetric spaces}
\author{Mikolaj Fraczyk}
  \address{Alfr\'ed R\'enyi Institute of Mathematics,  Re\'altanoda utca 13-15, H-1053, Budapest, Hungary}
  \email{fraczyk@renyi.hu}
\newcommand{\commentmikolaj}[1]
\begin{document}
\begin{abstract}
Let $X$ be a higher rank symmetric space or a Bruhat-Tits building of dimension at least $2$ such that the isometry group of $X$ has property $(T)$. We prove that for every torsion free lattice $\Gamma\subset {\rm Isom} X$ any homology class in $H_1(\Gamma\bs X,\F_2)$ has a representative cycle of total length $o_X(\Vol(\Gamma\bs X))$. As an application we show that $\dim_{\F_2} H_1(\Gamma\bs X,\F_2)=o_X(\Vol(\Gamma\bs X)).$
\end{abstract}
\maketitle
\section{Introduction}
Let $F$ be a local field, that is, $\R,\C$ or a finite extension of $\Q_p$ or $\F_p((t))$. A semisimple algebraic group $G$ over $F$ is called a higher rank group if its $F$-rank is at least $2$. Let $X$ be the symmetric space of $G$ if $F$ is archimedean or the Bruhat-Tits building of $G$ if $F$ is non-archimedean. In this paper we study quantitative properties of the group $H_1(\Gamma\bs X,\F_2)$ as $\Gamma$ varies among  lattices of $G$. Our first result says that if $G$ has property $(T)$ then every homology class in $H_1(\Gamma\bs X,\F_2)$ can be represented by a cycle of total length $o_X(\Vol(\Gamma\bs X)$. 
\begin{theorem}\label{tmainL}
Let $G$ be a higher rank group with property $(T)$. Let $(\Gamma_n)_{n\in \N}$ be a sequence of pairwise non conjugate torsion-free lattices in $G$ and let $\alpha_n\in H_1(\Gamma_n\bs X,\F_2)$ be any sequence of homology classes. Then there exists a sequence of representative cycles $c_n\in Z_1(\Gamma_n\bs X,\F_2)$ with total lengths $\ell(c_n)$ such that $$\lim_{n\to\infty}\frac{\ell(c_n)}{\Vol(\Gamma_n\bs X)}=0.$$
\end{theorem}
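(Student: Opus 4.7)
The strategy is to combine Benjamini--Schramm convergence of the quotients $\Gamma_n\bs X$ to $X$ with the local contractibility of $X$, using property $(T)$ both as the driver of the former and as a quantitative ingredient for the latter.

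In higher rank with property $(T)$, any sequence of pairwise non-conjugate torsion-free lattices Benjamini--Schramm converges to the trivial subgroup (the ``Seven Samurai'' theorem of Ab\'ert--Bergeron--Biringer--Gelander--Nikolov--Raimbault--Samet in the archimedean case, and its Bruhat--Tits building analogues). Concretely, for every $R>0$,
$$\frac{\Vol((\Gamma_n\bs X)_{<R})}{\Vol(\Gamma_n\bs X)}\xrightarrow{n\to\infty}0,$$
where $(\Gamma_n\bs X)_{<R}$ is the $R$-thin part; so at scale $R$ almost all of $\Gamma_n\bs X$ is literally isometric to $X$.

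Fix $R$ large and an auxiliary scale $r\ll R$. Take a maximal $r$-separated net in $\Gamma_n\bs X$ with its Voronoi cells $V_i$; each cell lying in the $R$-thick part is contractible (being isometric to a Voronoi cell in $X$). Working in a fixed bounded-geometry simplicial model of $\Gamma_n\bs X$, I pick any simplicial representative $c_n^{(0)}$ of $\alpha_n$ and shorten it cell-by-cell: in each thick cell $V_i$, replace $c_n^{(0)}\cap V_i$ by a shortest relative $1$-chain with the same boundary on $\partial V_i$, which is possible because $V_i$ is simply connected. To finish, push the shortened cycle entirely out of the thick part; the resulting representative then lives in the thin part and has total length at most $O_r(\Vol((\Gamma_n\bs X)_{<R}))=o_X(\Vol(\Gamma_n\bs X))$ by the displayed BS bound above.

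The hard step is the last one: producing a representative supported in the thin part, i.e.\ proving uniformly in $n$ the surjectivity of $H_1((\Gamma_n\bs X)_{<R},\F_2)\to H_1(\Gamma_n\bs X,\F_2)$, or equivalently a uniform mod-$2$ filling inequality for $1$-cycles in the $R$-thick part. This is where property $(T)$ must contribute more than just BS convergence: its spectral gap should yield, via a Garland/Cheeger-type inequality for the $\F_2$-coboundary operator on the $2$-skeleton of a suitable complex built from $\Gamma_n\bs X$, a uniform mod-$2$ isoperimetric inequality in the thick part. The delicate point, which I expect to be the technical heart of the paper, is that $\F_2$-filling is genuinely combinatorial --- one cannot average --- so the spectral input from $(T)$ must be converted into an explicit construction of a bounding $2$-chain, using the large local symmetry of $X$ together with an ergodic-theoretic input from the $G$-invariance of the limit invariant random subgroup.
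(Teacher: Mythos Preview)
Your proposal has a genuine gap at exactly the point you flag as the ``hard step,'' and the mechanism you conjecture for it is not what the paper uses and would not work as stated. A Garland/Cheeger-type argument from the spectral gap of property $(T)$ produces real (or $\ell^2$) filling inequalities; there is no known way to convert such an analytic input into an $\F_2$-coboundary inequality, precisely because, as you note, one cannot average. The paper does \emph{not} attempt to push representatives into the thin part, and does not establish surjectivity of $H_1((\Gamma_n\bs X)_{<R},\F_2)\to H_1(\Gamma_n\bs X,\F_2)$. Property $(T)$ is used only to invoke Benjamini--Schramm convergence; it plays no role in the length bound itself.

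The actual argument is geometric and uses both the higher rank hypothesis and the prime $2$ in an essential way. One takes a representative $c$ of $\alpha$ of \emph{minimal} total length; over $\F_2$ this is automatically an unoriented sum of closed geodesics $\sum_i \gamma_i$. The key observation (specific to $\F_2$) is a surgery: if two of these geodesics pass within some fixed $\kappa_1$ of each other in the $R$-thick part, one can cut and reconnect to reduce the total length by a fixed $\kappa_2>0$ without changing the mod-$2$ class. Hence the $\gamma_i$ are uniformly separated in $M_{\geq R}$. Now higher rank enters: each geodesic $\tilde\gamma_i$ lies in a flat of dimension $\geq 2$, so one can slide $\gamma_i$ sideways inside this flat with negligible change of length. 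This produces around each $\gamma_i$ a disjoint ``fat'' neighborhood of volume $\gg \ell^R(\gamma_i)\,R^{1/2}$ (an ellipsoid in the flat, thickened). Disjointness forces $\sum_i \ell^R(\gamma_i)\ll \Vol(M)/R^{1/2}$. Finally, one upgrades the bound on $\ell^R$ to a bound on $\ell$ by routing through a Gelander nerve complex and using BS convergence to absorb the thin-part contribution. The Voronoi cell-by-cell shortening you propose does not by itself yield any nontrivial bound, since without the pushing step the shortened cycle can still have length comparable to $\Vol(M)$.
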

The property $(T)$ is used only to ensure that the sequence $(\Gamma_n\bs X)_{n\in \N}$ converges Benjamini-Schramm to $X$ (c.f. \cite{7SamN,LevitGelander}). If $(\Gamma_n\bs X)_{n\in \N}$ converges Benjamini-Schramm to $X$ then the conclusion of Theorem \ref{tmainL} holds for all higher rank groups $G$. To prove Theorem \ref{tmainL} we study the representative cycles of minimal total length. Those cycles need to satisfy very strong geometric constrains that ultimately lead to an upper bound on their length inside the thick part of the ambient space (Proposition \ref{plength}). The quantitative description of the "shortest" representative cycles can be used to bound the size of $H_1(\Gamma\bs X,\F_2)$. We show that the dimension $\dim_{\F_2} H_1(\Gamma\bs X,\F_2)$ grows slower than the volume of $\Gamma\bs X$.
\begin{theorem}\label{tmain}
Let $G$ be a higher rank group with property $(T)$. For any sequence of pairwise non-conjugate, torsion-free lattices $(\Gamma_n)$ in $G$ we have 
$$\lim_{n\to\infty}\frac{\dim_{\F_2} H_1(\Gamma_n \bs X,\F_2)}{\Vol(\Gamma_n\bs X)}=0.$$ 
\end{theorem}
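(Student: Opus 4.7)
The plan is to combine Theorem \ref{tmainL} with a simple entropy counting argument: if every class in $H_1$ has a very short representative, the number of classes is bounded by the (small) number of short $\F_2$-chains.

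The first step is to upgrade Theorem \ref{tmainL} to a uniform statement. I claim that for every $\eps>0$ there exists $N_\eps$ such that for all $n\geq N_\eps$, \emph{every} class in $H_1(\Gamma_n\bs X,\F_2)$ admits a representative $c\in Z_1(\Gamma_n\bs X,\F_2)$ of length $\ell(c)\leq \eps \Vol(\Gamma_n\bs X)$. Were this to fail for some $\eps_0>0$, one could extract a subsequence $(n_k)$ and classes $\alpha_{n_k}$ whose minimum-length representatives all have length $>\eps_0\Vol(\Gamma_{n_k}\bs X)$; extending $(\alpha_{n_k})$ to a full sequence indexed by $\N$ by inserting the zero class at the remaining indices and applying Theorem \ref{tmainL} yields a contradiction.

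The second step is to discretize and count. Fix a $G$-invariant CW (or simplicial) structure on $X$ with bounded local combinatorics and $1$-cell lengths uniformly bounded above and below; for buildings this is the natural simplicial structure, and for symmetric spaces one takes a $G$-equivariant fine triangulation. Then the number of $1$-cells $E_n$ of $\Gamma_n\bs X$ satisfies $c_X\Vol(\Gamma_n\bs X)\leq E_n \leq C_X\Vol(\Gamma_n\bs X)$, and any $1$-cycle of length at most $L$ is supported on at most $C'_X L$ edges, where $c_X,C_X,C'_X$ depend only on $X$. For $n\geq N_\eps$, picking a shortest representative in each class embeds $H_1(\Gamma_n\bs X,\F_2)$ as a set into the collection of $\F_2$-chains on the $1$-skeleton with support of size at most $M = C'_X\eps\Vol(\Gamma_n\bs X)$. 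The standard entropy bound gives
$$|H_1(\Gamma_n\bs X,\F_2)|\;\leq\;\sum_{k\leq M}\binom{E_n}{k}\;\leq\; 2^{E_n H(M/E_n)},$$
where $H$ is the binary entropy. Taking $\log_2$ and dividing by $\Vol(\Gamma_n\bs X)$ yields
$$\frac{\dim_{\F_2}H_1(\Gamma_n\bs X,\F_2)}{\Vol(\Gamma_n\bs X)}\;\leq\; C_X\, H\!\left(\tfrac{C'_X\eps}{c_X}\right),$$
which tends to $0$ as $\eps\to 0$, completing the proof.

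All the real content sits in Theorem \ref{tmainL}; once short representatives are available, Theorem \ref{tmain} is essentially a counting exercise. The only mildly technical point is arranging a CW structure on $X$ with $E_n\asymp\Vol(\Gamma_n\bs X)$ uniformly in $n$, which is routine from the bounded local geometry of $X$.
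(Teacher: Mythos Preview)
Your overall strategy---short representatives plus an entropy count over a complex with $O(\Vol)$ edges---is exactly the paper's approach (Proposition~\ref{p.MTech} combined with Proposition~\ref{plength}), so there is nothing to compare at that level. The gap is in your discretization step. There is no $G$-invariant CW or simplicial structure on a symmetric space $X$: the isometry group acts transitively, so the orbit of any $0$-cell would be all of $X$. What you actually need is, for each $\Gamma_n$, a complex $\mathcal N_n$ with (i) $\pi_1(\mathcal N_n)\twoheadrightarrow \pi_1(\Gamma_n\bs X)$, (ii) at most $C_X\Vol(\Gamma_n\bs X)$ one-cells with $C_X$ independent of $n$, and (iii) a map sending a cycle of length $L$ in $\Gamma_n\bs X$ to a homologous $1$-chain in $\mathcal N_n$ supported on $O(L)$ edges. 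This is \emph{not} routine from the bounded geometry of $X$: a na\"ive triangulation of $\Gamma_n\bs X$ must use cells of size comparable to the local injectivity radius, and since there is no uniform lower bound on $\injrad(\Gamma_n\bs X)$ across the sequence (and the lattices may be non-uniform), the edge count can blow up relative to volume and simplicial approximation can fail to be uniformly Lipschitz.

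The paper handles this via Gelander's nerve complex \cite{Gelander2}, which supplies (i) and (ii), together with Lemma~\ref{l-cycleapprox} for (iii). Note that Lemma~\ref{l-cycleapprox} does not give ``$O(L)$ edges'' on the nose but rather $C_1\ell^R(\alpha)+O(\Vol(M_{<R}))$ edges; the thin-part term is then absorbed using Benjamini--Schramm convergence \cite{7Sam}. Once you replace your hypothetical $G$-equivariant triangulation by the Gelander complex and keep track of the thin part, your entropy bound becomes exactly Lemma~\ref{lestimate} and Proposition~\ref{p.MTech}, and the proof goes through.
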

The conclusion holds for any higher rank group $G$ if we assume that the sequence $(\Gamma_n\bs X)_{n\in \N}$ converges Benjamini-Schramm to $X$. It seems that our result is the first one dealing with the growth of mod-$p$ Betti numbers in the non-commensurable setting. We review the state of the literature on the growth of Betti numbers in the next part of the introduction. At this point we would like to stress that the complex of differential forms on $\Gamma\bs X$ cannot be used to compute the mod-$p$ cohomology groups so the analytic methods that work for rational Betti numebrs are not accessible. In most cases the results on the growth of mod-$p$ homology groups of symmetric spaces were proven algebraically, most notably using completed cohomology groups \cite{CalEm, BLLS2014}, or they were deduced from the vanishing of the rank gradient  \cite{AbGeNi2015}. In contrast, our method is geometric in nature and uses the higher rank assumption in a very direct way. The characteristic $2$ of the field $\F_2$ plays an important role in the proof and so far we were unable to extend it to odd primes. It is an interesting question whether the analogue of Theorem \ref{tmainL} holds for $p$ odd.

Let us briefly review what is known and what is expected of the growth of the Betti numbers of locally symmetric spaces. The famous L\"uck approximation theorem \cite{Luck94} asserts that if $Y$ is a finite CW-complex and $(Y_n)_{n\in \N}$ is a sequence of finite Galois (regular) covers of $Y$ such that the intersection of the fundamental groups $\bigcap \pi(Y_n)$ is trivial then 
$$\lim_{n\to\infty}\frac{b_k(Y_n,\Q)}{[\pi(Y):\pi(Y_n)]}=\beta_k^{(2)}(Y),$$ where  $b_k(Y,\Q)=\dim_\Q H_k(Y,\Q)$ is the $k$-th rational Betti number and $\beta_k^{(2)}(Y)$ is the $k$-th $L^2$-Betti number.  In \cite{7Sam} Abert, Bergeron, Biringer, Gelander, Nikolov, Raimbault and Samet proved a far-reaching generalisation of this theorem in the setting of higher rank locally symmetric spaces. They show that for any uniformly discrete\footnote{The uniform discreteness assumption will be lifted in an upcoming work \cite{sequel} by a subset of authors of \cite{7Sam}.} sequence of pairwise non-conjugate lattices $(\Gamma_n)_{n\in \N}$ in a higher rank Lie group $G$ with property (T)   the limit $$\lim_{n\to\infty}\frac{\dim_\C H_k(\Gamma_n\bs X,\C)}{\Vol(\Gamma_n\bs X)}=\beta_k^{(2)}(X).$$ Recently this result has been extended to groups over non-Archimedean local fields \cite{LevitGelander}. Note that the above theorems hold for sequences of non-commensurable lattices as opposed to the original L\"uck approximation theorem where the very setting forces us to consider only commensurable sequences of groups.
%It is proved that such sequences have the limit multiplicity property (see \cite{FinisKyoto}) and the convergence of normalized Betti numbers is then deduced from Matsushima's formula \cite{Matsushima}. 

Much less is known when we replace the coefficient group by a finite field. One of the first results in this direction is due to Calegari and Emerton \cite{CalEm}. Let $\Gamma$ be a lattice in $\SL(2,\C)$ and let $p$ be a rational prime. Calegari and Emerton define $p$-adic analytic towers of covers of $\Gamma\bs \mathbb H^3$ and study the growth of the first mod-$p$ homology group in such towers. Their results imply that in a $p$-adic analytic tower $(\Gamma_n\bs \mathbb H^3)$ the limit 
$$\lim_{n\to\infty}\frac{\dim_{\F_p} H_1(\Gamma_n\bs X,\F_p) }{\Vol(\Gamma_n\bs X)}$$
always exists. To tackle the problem of mod-$p$ homology growth Calegari and Emerton developed the theory of completed homology and cohomology groups (see \cite{CalEmSurv}). 
Their result was later generalized by Bergeron, Linnell, L{\"u}ck and Sauer to $p$-adic analytic towers of CW-complexes in \cite{BLLS2014}.  In both results we consider only the  chains of normal subgroups which are of $p$-power index in $\Gamma$. In the context of growth of $\F_p$-Betti numbers for amenable fundamental groups the last condition may be relaxed. In \cite{LinLuSa2010} Linnel, L\"uck, and Sauer show that for an amenable group $\Gamma$  and a chain of normal subgroups $\Gamma_n$ the limit
$$\lim_{n\to\infty}\frac{\dim_{\F_p}H_k(\Gamma_n,\F_p)}{[\Gamma:\Gamma_n]}$$ exists and is equal to the Ore dimension of certain homology group (see \cite[Theorem 0.2]{LinLuSa2010}). Still, the setting of the theorem makes sense only for sequences of commensurable groups. 

%The strength of this result is that it is uniform in the commensurability class of $\Gamma$  and that it deals mod$-p$ homology which is hard to detect analytically. The drawbacks of our method is that it seems to work only for first homology group with coefficients in $\F_2$.% It is worth mentioning that our argument is rather elementary. We use is  the fact that every non-trivial sequence of locally symmetric spaces of  higher rank Lie group tends to the universal cover in the Benjamini-Schramm topology (\cite[Thm 1.5]{7Sam}).

Let $G$ be a higher rank real semisimple Lie group. Margulis normal subgroup theorem \cite{Margulis} implies that $H_1(\Gamma_n\bs X,\Z)$ is finite. %Our result controls the size of the $2-$torsion part of $H_1(\Gamma_n\bs X,\Z)$ in terms of $\Vol(\Gamma\bs X)$.
In \cite[Conjecture 3]{AbGeNi2015} Ab\'ert, Gelander and Nikolov conjectured that for any sequence of pairwise non-conujugate lattices $(\Gamma_n)_{n\in\N}$ in $G$ we have \begin{equation}\label{conjrank} \lim_{n\to\infty}\frac{d(\Gamma_n)}{\Vol(\Gamma_n\bs X)}=\lim_{n_\to\infty}\frac{\log|H_1(\Gamma_n\bs X,\Z)|}{\Vol(\Gamma_n\bs X)}=0\\
\end{equation} where $d(\Gamma_n)$ is the rank of $\Gamma_n$ i.e. the minimal number of generators of $\Gamma_n$. If the limit on the left-hand side exists it is called the \textbf{ rank gradient} of $(\Gamma_n)$, this notion was introduced by Lackenby in \cite{Lackenby2005}. The conjecture implies the vanishing of normalized mod-$p$ Betti numbers because $d(\Gamma_n)\geq \dim_{\F_p}H_1(\Gamma_n\bs X,\F_p)$. In \cite[Theorem 2]{AbGeNi2015} it was shown that (\ref{conjrank}) holds for all sequences of pairwise different subgroups $(\Gamma_n)\subset \Gamma$ where $\Gamma$ is a right-angled lattice in a higher rank semisimple Lie group. %A group $H$ is called \textbf{right angled} if it admits a finite sequence of generators (possibly with repetitions) $s_1,s_2,\ldots, s_d$ such that $[s_i,s_{i+}]=1$.  
%If the group $\Gamma$ is a lattice in a higher rank group then every non-stationary sequence of subgroups is a Farber sequence. This is why no additional assumptions on the sequence of subgroups are needed.%The implicit constant depends on $\Gamma$ so even in the case of congruence lattices it wasn't previously known that the dimension of the first mod$-p$ homology group grows sublinearly in volume if we consider sequences of non-commensurable lattices. 
The argument in \cite{AbGeNi2015} is based on the relation between the rank gradient and the combinatorial cost (see \cite{Gaboriau2000,Levitt1995} and \cite{AbNi2007}). Combinatorial cost is a powerful tool when one wants to study the rank gradient of a sequence of subgroups of a given group but to the author's knowledge this method is not adapted to handle the non-commensurable sequences of lattices. 

Yet another instance where the growth of the first mod-$p$ homology is known is when $\Gamma$ has the congruence subgroup property and $\Gamma_n$ varies among the subgroups of $\Gamma$. In that case we control the rank by the logarithm of index \cite[Proposition 30]{AbGeNi2015}
$$d(\Gamma_n)\leq C_{\Gamma} \log[\Gamma:\Gamma_n].$$

%\commentmikolaj{While I hahve some ideas for primes other than $2$ it seem that they will require heavier machinery. It may also make the proof more interesting.}
\subsection{Notation.} If $f,g$ are two functions dependent, among others on a variable $X$ we write $f\ll_X g$ if there exists a constant $C$ dependent on $X$ such that $f\leq Cg$. We will write $f=O_X(g)$ if $|f|\ll_X |g|$ and $f=o_X(g)$ if $\lim \frac{f}{g}=0$ and the speed of convergence depends on $X$. Logarithms are always in base $e$. For a rational prime number $p$ we write $\F_p$ for the field with $p$ elements. Let $x\in \Gamma\bs X$, the injectivity radius $\injrad(x)$ is defined as the supremum of radii $r>0$ such that the open ball of radius $r$ around $x$ lifts to $X$. If $M$ is a subset of $\Gamma\bs X$ and $R>0$ then we distinguish the  $R$-thin part $M_{\leq R}:=\{x\in M\mid \injrad(x)\leq R\}$ and the $R$-thick part $M_{\geq R}:=\{x\in M\mid \injrad(x)\geq R\}.$
\subsection{Idea of the proof.}
Let $(X_n)_{n\in\N}$ be a sequence of lattices in $G$. To simplify the argument let us assume in this sketch that $G$ is a real semisimple Lie group, that the \textbf{fundamental rank} $\delta(G)$ of $G$ is at least $2$ and that the injectivity radius $R_n=\injrad(\Gamma_n\bs X)$ goes to infinity. The fundamental rank is the difference between the absolute rank of $G$ and the maximal rank of a compact torus in $G$, e.g. $\delta(\SL(2,\C))=1,\delta(\SL(2m,\R))=m-1$ and $\delta(\SL(2m+1,\R))=m$. This assumption will ensure that the closed geodesics in $\Gamma\bs X$ lie in closed flats of dimension at least $2$.
The advantage of working with $\F_2$ is that every homology class is represented by a sum of unoriented cycles. By Theorem \ref{tmainL} the elements of $H_1(\Gamma_n \bs X,\F_2)$ can be represented by combinations of closed geodesics whose total length is $o(\Vol(\Gamma_n\bs X)$ (see Proposition \ref{plength}). This is the key ingredient of the proof of Theorem \ref{tmain} . Using a nerve complex constructed by Gelander \cite{Gelander2} and the uniform bound on the lengths of representatives we show that $| H_1(\Gamma_n \bs X,\F_2)|\leq 2^{o(Vol(\Gamma_n\bs X))}$ (see Proposition \ref{p.MTech}) which gives Theorem \ref{tmain}. 

To prove Theorem \ref{tmainL} we define the "reduced representatives" of a homology class in $H_1(\Gamma_n \bs X,\F_2)$. Those are representatives $c\in Z_1(\Gamma_n\bs X,\F_2)$ of minimal total length. Since we do not need to care about the orientation it is easy to see that $c$ is always a sum of uniformly separated closed geodesic (see Lemma \ref{lseparation}). If the fundamental rank is at least $2$ then every closed geodesic on $\Gamma_n\bs X$ is contained in a closed totally geodesic flat subspace of dimension at least $2$. We can move the geodesic components of $c$ in their respective maximal flats without changing the homotopy class of $c$. Together with uniform separation of geodesics this yields the uniform separation of the flats supporting $c$. In the general case this argument is replaced by Lemma \ref{lrepelling}. The flats supporting $c$ are uniformly separated, say $\delta$-separated so their $\delta/2$-thickenings are disjoint. Since the injectivity radius is bounded from below by $R_n$ we can show that the $\delta/2$-thickening of a flat containing a closed geodesic $\gamma$ of length $\ell(\gamma)$ has volume $\gg\ell(\gamma)R_n$. This means that the total length of a reduced representative is bounded, up to a constant, by $\Vol(\Gamma_n\bs X)/R_n$.

In the general case the sequence $(\Gamma_n\bs X)$ converges to $X$ in the Benjamini-Schramm topology \cite{7Sam} so the injectivity radius in a typical point is large. We show that to each geodesic supporting $c$ we can attach a "local flat" of large volume and that those local flats are disjoint. We then deduce that $c$ cannot fill $\Gamma_n\bs X$ with positive density so $\ell(c)=o(\Vol(\Gamma_n\bs X))$.
\subsection{Structure of the article} In Section \ref{S1} we establish the connection between the lengths of representatives of the homology classes and the dimension of the first homology group. The main tools are the simplicial complexes constructed by Gelander in \cite{Gelander1,Gelander2} and the Benjamini-Schramm convergence of higher rank locally symmetric spaces established in \cite{7Sam}. In Section \ref{S2} we prove that in a higher rank locally symmetric space $M$ all the homology classes in $H_1(M,\F_2)$ are represented by a cycle whose length inside the thick part is sublinear in the volume of $M$ from which we deduce the archimedean case in Theorem \ref{tmainL}. We also give the proof of the second main theorem (Theorem \ref{tmain}). In the last Section \ref{S3} we indicate how to adapt the proofs to the non-Archimedean setting of the Bruhat-Tits buildings.
\subsection*{Acknowledgement} Part of this work was done as a part of the author's thesis at Universit\'e Paris Sud supported by a public grant as part of the Investissement d'avenir project, reference ANR-11-LABX-0056-LMH, LabEx LMH. The author was partially supported by the ERC Consolidator Grant No. 648017.
\section{Lengths of homology classes and the dimension}\label{S1}
Let $X$ be  a symmetric space of higher rank such that $G={\rm Isom(X)}$ has property $(T)$. Let $\Gamma$ be a torsion-free lattice in $G$ and fix $p=2$. In the sequel write $M=M_{\Gamma}:=\Gamma\bs X$ if $\Gamma$ is uniform or $M_\Gamma:=(\Gamma\bs X)_{\leq \varepsilon}$ if $\Gamma$ is non-uniform\footnote{We perform this modification because in Lemma \ref{lexistence} we need $M$ to be compact.} and $\varepsilon<1$ is small enough so that $\Gamma\bs X$ retracts onto $(\Gamma\bs X)_{\leq\varepsilon}$ and $\Vol(\Gamma\bs X)-\Vol(M)\geq 1$. Note that this means the bounds in terms of $\Vol(M)$ translate into the same bounds in terms of $\Vol(\Gamma\bs X)$. We also have $H_1(\Gamma\bs X,\F_p)\simeq H_1(M,\F_p)$. The reasoning is carried out for any prime $p$, the special properties of $p=2$ play an important part only in the next section. We write $Z_1(M,\F_p)$ for the module of $1$-cycles on $M$ with coefficients in $\F_p$. Any cycle $c\in Z_1(M,\F_p)$ can be represented as 
$$c=\sum_{i\in I}a_i\gamma_i,$$
where $I$ is a finite set of indices, $a_i\in \F_p\setminus\{0\}$ and $\gamma_i$ are oriented smooth differentiable curves $\gamma_i\colon [0,1]\to M$. Fix $R>0$, the $R$-length of a cycle $c$ is defined as \begin{equation}\label{eRlength}\ell^R(c):=\sum_{i\in I}\ell^R(\gamma),
\end{equation} where $\ell^R(\gamma_i)$ stands for the length of $\gamma_i\cap M_{>R}$. We put $\ell(c):=\ell^0(c)$. We define the \textbf{total length} (resp. total $R$-length) of $\alpha\in H_1(M,\F_p)$ by 
\begin{align}
\ell(\alpha)=&\inf_{\substack{c\in Z_1(M,\F_p)\\ [c]=\alpha}}\ell(c)\\
\ell^R(\alpha)=&\inf_{\substack{c\in Z_1(M,\F_p)\\ [c]=\alpha}}\ell^R(c)
\end{align}
Finally, the normalized \textbf{$R$-length} of $M$ is defined as $$\ell^R(M)=\frac{\sup_{\alpha_\in H_1(M,\F_p)}\ell^R(\alpha)}{\Vol(M)}.$$ The following proposition is the main goal of this section:
\begin{proposition}\label{p.MTech} For every $\delta>0$ there exist $\delta'>0$ such  that for every locally symmetric space $M=\Gamma\bs X$ with $\ell^R(M)\leq {\delta'}$ and $\Vol(M)$ big enough we have
$$\dim_{\F_p}H_1(M,\F_p)\leq \delta\Vol(M).$$
\end{proposition}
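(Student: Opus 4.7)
The plan is to approximate $M$ by a simplicial complex $K$ of bounded geometry due to Gelander, push each homology class to a cycle in the $1$-skeleton $K^{(1)}$ with small support, and count the resulting cycles.

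First, I would invoke Gelander's simplicial approximation from \cite{Gelander1, Gelander2}: there exists a simplicial complex $K$ homotopy equivalent to $M$ whose total number of simplices is at most $c_X \Vol(M)$, with uniformly bounded simplex diameters and vertex valences depending only on $X$. In particular the number of edges $E = E(K)$ satisfies $E \leq c_X \Vol(M)$, and the construction can be arranged to respect the $R$-thick/thin decomposition so that the number of edges meeting the $R$-thin part is at most $c_X' \Vol(M_{\leq R})$. The bounded geometry will yield the following approximation statement: every smooth cycle $c \in Z_1(M, \F_p)$ is homologous to a simplicial cycle $c' \in Z_1(K^{(1)}, \F_p)$ whose support uses at most $c_X'' \ell^R(c)$ edges in the thick part of $K$, with the remaining edges drawn from the thin edges of $K^{(1)}$.

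Next, I would apply the Benjamini-Schramm convergence of higher-rank locally symmetric spaces with property $(T)$ \cite{7Sam}: for every $\eta > 0$ and every fixed $R > 0$ there is a volume threshold $V_0 = V_0(\eta, R, X)$ such that $\Vol(M) \geq V_0$ forces $\Vol(M_{\leq R}) \leq \eta \Vol(M)$. Fix such $R$. Given any class $\alpha \in H_1(M, \F_p)$, the hypothesis $\ell^R(M) \leq \delta'$ yields a representative $c$ with $\ell^R(c) \leq \delta' \Vol(M) + 1$. Its simplicial approximation $c'$ is a cycle in $K^{(1)}$ whose support has size at most
\begin{align*}
m := c_X''(\delta' \Vol(M) + 1) + c_X' \eta \Vol(M).
\end{align*}
Every homology class is thus represented by a simplicial $1$-cycle of support size $\leq m$, so
\begin{align*}
|H_1(M, \F_p)| \leq \sum_{k=0}^{m} \binom{E}{k} (p-1)^k \leq (m+1) \binom{E}{m} p^m.
\end{align*}
Taking logarithms, using $\binom{E}{m} \leq 2^{E H_2(m/E)}$ for $m \leq E/2$ with $H_2$ the binary entropy, and $E \leq c_X \Vol(M)$, one obtains $\dim_{\F_p} H_1(M, \F_p) \leq C_X \bigl( H_2(m/E) + m/E \bigr) \Vol(M)$ for an absolute constant $C_X = C_X(p)$. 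Since $m/E$ can be made arbitrarily small by choosing $\delta'$ and $\eta$ small, and $H_2(t) \to 0$ as $t \to 0$, one picks $\delta'$ sufficiently small and then the volume threshold sufficiently large so that this bound is at most $\delta \Vol(M)$.

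The main obstacle will be establishing the approximation statement in the first paragraph: showing that a smooth cycle of $R$-length $L$ can be pushed into $K^{(1)}$ using only $O(L)$ edges in the thick part, while its behavior in the thin part is controlled by the ambient number of thin edges of $K$ (independently of the length of $c$ inside the thin part). This requires both that edges of $K$ in the thick part have uniformly bounded length (from above and below), and that thin components of $M$ in the higher-rank setting have simple enough topology — they deformation-retract onto small subcomplexes — so that any portion of a cycle lying there can be simplified without enlarging its support in $K^{(1)}$.
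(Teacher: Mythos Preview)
Your proposal is correct and takes essentially the same approach as the paper. The paper packages your approximation statement as a separate lemma (Lemma~\ref{l-cycleapprox}), proving it via Gelander's retraction onto $\psi_{\leq 0}$ and the nerve cover $\mathcal U$, and then replaces your binary-entropy estimate by a direct Stirling bound (Lemma~\ref{lestimate}); the counting argument is otherwise identical.
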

We shall prove it after introducing some tools. In \cite{Gelander1,Gelander2} Gelander constructed for every manifold $M=\Gamma\bs X$ a simplicial complex $\mathcal N$, with $\pi_1(\mathcal N)\simeq \pi_1(M)$, with the number of vertices bounded by $A\Vol(M)$ and degrees bounded 
uniformly by $B$ for certain constants $A=A(X),B=B(X)$ dependent only on $X$. We shall extract from his construction the following lemma:
\begin{lemma}\label{l-cycleapprox}
Let $M,X$ be as before and let $\mathcal N$ be a simplicial complex constructed in \cite{Gelander2} and let $R>0$ be bigger than the Margulis constant of $X$. There exists a constant $C_1=C_1(X)$ such that any homology class $\alpha\in H_1(M,\F_p)$ is represented by an integral combination $\sum_{i\in I}a_i e_i,$ where $e_1$ are edges of $\mathcal{N}$ and $|I|\leq C_1\ell^R(\alpha) + O(\Vol(M_{<R}))$.
\end{lemma}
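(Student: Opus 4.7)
The plan is to snap a near-optimal representative of $\alpha$ to an edge cycle in Gelander's 1-skeleton, controlling the number of edges used separately in the thick and thin parts. Recall that Gelander's complex $\mathcal{N}$ is built from an $r$-separated (and $\lesssim 2r$-dense) net $V\subset M$ for some fixed scale $r=r(X)$ smaller than the Margulis constant of $X$, with two vertices joined by an edge whenever their basepoints lie within a fixed multiple of $r$, plus auxiliary simplices over the thin part. There is a natural simplicial map $\iota\colon \mathcal{N}\to M$ sending vertices to their basepoints and edges to minimizing geodesics, and Gelander's construction makes $\iota$ a homotopy equivalence. In particular $\iota_*\colon H_1(\mathcal{N}^{(1)},\F_p)\twoheadrightarrow H_1(M,\F_p)$, so every class $\alpha$ admits an edge-cycle representative; the task is to bound how many distinct edges are needed.

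Pick a representative $c=\sum_{i\in J}a_i\gamma_i$ of $\alpha$ with $\ell^R(c)\leq \ell^R(\alpha)+1$. Partition each $\gamma_i$ into maximal sub-arcs that lie in $M_{>R}$ or in $M_{<R}$. Each thick sub-arc of length $L$ is subdivided into $\lceil L/r\rceil$ consecutive pieces of length $\leq r$; to each subdivision point we assign a nearest vertex $v_j\in V$, necessarily within distance $\leq 2r$. Since $r$ is below the Margulis constant and consecutive basepoints lie within $\leq 4r$, the vertices $v_{j-1},v_j$ are joined by an edge of $\mathcal{N}$, and the resulting edge-path is homotopic in $M$ to the original sub-arc by a unique local geodesic homotopy. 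Each thin sub-arc is replaced by an arbitrary edge-path in $\mathcal{N}^{(1)}$ connecting its snapped endpoints through vertices lying in or adjacent to the relevant thin component of $M$. Assembling these pieces and taking the $\F_p$-sum with coefficients $a_i$ gives a cycle $\hat c\in Z_1(\mathcal{N}^{(1)},\F_p)$ with $\iota_*[\hat c]=\alpha$.

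It remains to count the distinct edges appearing in $\hat c$. The thick contribution is at most $\sum_i \ell^R(\gamma_i)/r+(\text{number of thick sub-arc endpoints})\leq \ell^R(c)/r+O(\Vol(M_{<R}))$, where the error term bounds the number of thick/thin transitions: each such transition is absorbed by a thin component of $M$ of volume $\gtrsim_X 1$. The thin contribution is bounded by the total number of edges of $\mathcal{N}$ incident to vertices near $M_{<R}$; using the $r$-separation of $V$ and the uniform degree bound $B$, this is $\leq B\cdot |V\cap M_{<R}|\ll_X \Vol(M_{<R})$. Combining, the number $|I|$ of distinct edges used is $\leq C_1\ell^R(\alpha)+O(\Vol(M_{<R}))$ for a suitable $C_1=C_1(X)$.

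The main delicacy is justifying the snapping step: one has to argue that the combinatorial edge-path is genuinely homotopic to the original arc (so that the homology class is preserved), and that thick/thin transitions do not inflate the count. The first point is handled by Gelander's homotopy equivalence combined with the Margulis-constant lower bound on $r$, which ensures short local paths lift uniquely to $\mathcal{N}$; the second is a straightforward volume comparison for thin components.
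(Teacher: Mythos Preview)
Your approach follows the same outline as the paper's---snap a near-optimal representative to $\mathcal N^{(1)}$ and bound thick edges by $\ell^R$, thin edges by $\Vol(M_{<R})$---but two steps are not justified.

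First, and more seriously, the replacement of each thin sub-arc by ``an arbitrary edge-path \ldots through vertices lying in or adjacent to the relevant thin component'' does not preserve the class. Thin components are far from simply connected (they contain short closed geodesics or cusp loops), so two edge-paths with the same endpoints in a thin component are generally not homotopic rel endpoints. Your final paragraph defends the snapping by invoking the Margulis-constant lower bound on $r$, but that argument only works in the thick part where local paths lift uniquely; in the thin part the injectivity radius is small and no such uniqueness holds. The paper deals with this by using the specific structure of Gelander's construction that your description elides: the net $\mathcal S$ does not sit in all of $M$ but in a subset $\psi_{\le 0}\subset M_{\ge\kappa}$, and there is an explicit retract $r\colon M\setminus N\to\psi_{\le 0}$ (with $N$ of codimension $\ge 2$) which is the identity on $M_{\ge R}$. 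One first perturbs $\gamma$ off $N$ and applies $r$---this preserves the homotopy class---and then the retracted thin piece $r(\gamma\cap M_{\le R})$ already lies in the union of the good cover $\mathcal U$, so its nerve image is determined up to contractible ambiguity and uses only edges of $\mathcal N_{\le R}$. Without the retract you have no mechanism for choosing a homotopically correct edge-path in the thin part.

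Second, the bound ``number of thick sub-arc endpoints $=O(\Vol(M_{<R}))$'' via ``each transition is absorbed by a thin component of volume $\gtrsim_X 1$'' is false as stated: a single curve can cross $\partial M_{\ge R}$ arbitrarily many times through the same thin component. This is not fatal once you remember you are counting \emph{distinct} edges: every transition edge has a vertex within $O(r)$ of $M_{<R}$, and there are only $O(\Vol(M_{<R+O(r)}))$ such edges in $\mathcal N$ total, regardless of how often the curve visits them. The paper sidesteps the issue in exactly this way, bounding the thin contribution $|E_1|$ by the total number of edges in the subcomplex $\mathcal N_{\le R}$ rather than by anything tied to the curve.
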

\begin{proof}
Let us recall few details of the Gelander's construction \cite{Gelander2}. Write $d:X\times X\to X$ for the Riemannian metric on $X$ and let $\varepsilon_G,\mu$ be as in \cite[p.5-6]{Gelander2}. 
Let $\varepsilon=\varepsilon_G/2$. Write $M_{\geq R}$ for the $R$-thick part and $M_{\leq}$ for the $R$-thin part of $M$. In \cite{Gelander2}, Gelander shows that there exists a closed submanifold $N$ of codimension at least $2$ and a subset $\psi_{\leq 0}$ of $M\setminus N$ such that $M\setminus N$ retracts onto arbitrarily small neighborhood of $\psi_{\leq 0}$. By construction the subset $\psi_{\leq 0}$ is contained in $M_{\geq \frac{\varepsilon}{2\mu}}$. The inclusion $\psi_{\leq 0}\subset M\setminus N$ induces a surjective map $i^*\colon\pi_1(\psi_{\leq 0})\to\pi_1(M)$. Let $\kappa=\frac{\varepsilon}{2\mu}$ (in \cite{Gelander2} this quantity is called $\alpha$, we change the notation to avoid confusion with homology classes). We choose a maximal $\kappa/2$-separated subset $\mathcal S$ of $\psi_{\leq 0}$. The union of $\kappa$-balls around the points of $\mathcal S$ covers $\psi_{\leq 0}$. We denote the covering by $\mathcal{U}$. Write $\mathcal U_{\leq R}$ for the subset of those sets in $\mathcal U$ that are contained in $M_{\leq R}$.  The complex $\mathcal N$ is obtained as the nerve of $\mathcal U$ and by general theory it is homotopic to the union of sets in $\mathcal U$. In particular $\pi_1(\mathcal N)$ surjects onto $\pi_1(\psi_{\leq 0})$ which surjects onto $\pi_1(M)$. Because of this we justified in saying that homology classes in $H_1(M,\F_p)$ are represented by cycles in $Z_1(\mathcal{N},\F_p)$. 

 Define the subcomplex $\mathcal N_{\leq R}$ as the nerve of $U_{\leq R}$, it is a subcomplex of $\mathcal N$ which has only $O(\Vol(M_{\leq R }))$ simplices.
  We are going to use the fact that both complexes are nerves of coverings by balls od radius $\kappa$. We are ready to prove  Lemma \ref{l-cycleapprox}:

\textbf{Step 1.} Let $\gamma$ be a simple closed geodesic on $M$. Write $\gamma=\gamma_1\sqcup\gamma_2$ where $\gamma_1=\gamma\cap M_{\leq R}$ and $\gamma_2=\gamma\cap M_{\geq R}$. By perturbing $\gamma$ by an arbitrarily small amount we can assume it is disjoint from $N$. Write $r:M\setminus N\to \psi_{\leq 0}$ for the retract defined by Gelander. Then $i^*(r(\gamma))$ represents the same homotopy class as $\gamma$. Note that by the formulas defining $\psi$ \cite[p.7]{Gelander2}if $R\geq 2\varepsilon$ then $\psi_{\leq 0}\subset M_{\geq R}$. In particular $r$ is identity on $M_{\geq R}$. We have $r(\gamma)=r(\gamma_1)\cup\gamma_2$. Now, as $\gamma_1$ passes through $\psi_{\leq 0}$ and $\gamma_2 \subset M_{\geq R}$ we can find a finite families of balls $\mathcal F_i, i=1,2$ from the good cover $\mathcal U$ such that $\gamma_i\subset \bigcup_{U\in \mathcal F_i}U,|\mathcal{F}_1|=O(\Vol(M_{<R}))$ and $|\mathcal{F}_3|\leq C_1 \ell(\gamma_2)$. Set $\mathcal F_2$ can be taken as the set of all balls in $\mathcal U$ intersecting $\gamma_2$. The centers of balls in $\mathcal U$ are uniformly separated by $\kappa/2$ , hence the inequality $|\mathcal{F}_2|\leq C_1 \ell(\gamma_2)$. We deduce that the homology class of $\gamma$ can be represented by a sum of certain number of edges in $\mathcal{N}_{\leq R}$, and at most $C_1\ell(\gamma_2)\leq C_1\ell^R(\gamma)$ edges from $\mathcal{N}$.

\textbf{Step 2.} Pick $\delta>0$ small. Let $c=\sum_{j\in J}a_j\gamma_j$ be a representative of $\alpha$ such that $\ell(c)^R\leq \ell^R(\alpha)+\delta$. By the first step we can represent $c$ as $c=c_1+c_2$ where 
$c_1=\sum_{e\in E_1\subset \mathcal{N}_{<R}}a_e e$ and $c_2=\sum_{e\in E_2\subset  \mathcal{N}}b_e e$ with $|E_2|\leq C_1\ell^R(\alpha)+C_1\delta$. The number of edges in $\mathcal{N}_{\leq R}$ is $O(\Vol(M_{\leq R})$ so $|E_1|=O(\Vol(M_{\leq R}))$. We put the inequalities together to get that $\alpha$ is represented by a combination of $C_1\ell^R(\alpha)+O(\Vol(M_{\leq R}))$ edges. 
\end{proof}

\begin{lemma} \label{lestimate} For every $0<\delta<\frac{1}{2}$ and $n$ big enough we have
$$\sum_{i=1}^{[\delta n]}{n\choose i}(p-1)^i\ll p^{\delta(3-\log\delta) n}$$
\end{lemma}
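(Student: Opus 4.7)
The plan is to dominate the sum by its largest term (times a polynomial prefactor), apply the elementary bound $\binom{n}{k}\leq(en/k)^{k}$, and compare the resulting exponential rate with the target.

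First I would observe that for $\delta<1/2\leq(p-1)/p$ and any prime $p\geq 2$, the ratio of successive summands
$$\frac{\binom{n}{i+1}(p-1)^{i+1}}{\binom{n}{i}(p-1)^{i}}=\frac{(n-i)(p-1)}{i+1}\geq 1$$
throughout $1\leq i\leq[\delta n]$, so the terms are non-decreasing and
$$\sum_{i=1}^{[\delta n]}\binom{n}{i}(p-1)^{i}\leq \delta n\cdot\binom{n}{[\delta n]}(p-1)^{[\delta n]}\leq \delta n\left(\frac{(p-1)e}{\delta}\right)^{\delta n+O(1)}.$$

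Next I would take the logarithm base $p$ of this bound to line it up with the right-hand side of the lemma. Since $\log_p(p-1)\leq 1$ and $\log_p e=1/\ln p\leq 1/\ln 2$, one has
$$\log_p\!\left(\frac{(p-1)e}{\delta}\right)\leq 1+\frac{1}{\ln 2}-\log_p\delta<3-\log_p\delta$$
uniformly in $p\geq 2$. Hence the sum is at most $p^{\delta(3-\log\delta) n}\cdot n^{O_\delta(1)}$, and the polynomial prefactor is absorbed into the implicit constant once $n$ is sufficiently large depending on $\delta$.

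The main step that requires any real thought is the arithmetic check $1+1/\ln 2\approx 2.443<3$, which provides enough slack to soak up both $\log_p(p-1)\leq 1$ and $\log_p e\leq 1/\ln 2$ simultaneously across all primes $p\geq 2$. The hypothesis $\delta<1/2$ plays no role beyond ensuring monotonicity of the summands on $[1,[\delta n]]$ (so that the sum is genuinely controlled by its last term); all other steps are insensitive to how small $\delta$ is.
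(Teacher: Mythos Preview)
Your argument is essentially the paper's: bound the sum by $\delta n$ times its last term and estimate $\binom{n}{[\delta n]}$ via $\binom{n}{k}\leq(en/k)^{k}$ (the paper phrases this as Stirling, then absorbs the prefactor $\delta n$ by passing from $(p-1)^{\delta n}$ to $(p-\tfrac12)^{\delta n}$). One small slip, which the paper's own proof also makes: in your final line you silently replace $-\log_p\delta$ by $-\log\delta$; with the paper's convention $\log=\ln$ and $p=2$ these differ by the factor $1/\ln 2>1$, so what you actually established is the bound $p^{\delta(3-\log_p\delta)n}$ rather than $p^{\delta(3-\ln\delta)n}$. This is harmless for the application, since the lemma is only invoked through the fact that $\delta(3-\log\delta)\to 0$ as $\delta\to 0$, which holds in either base.
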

\begin{proof} By Stirling approximation
\begin{align}
\sum_{i=1}^{[\delta n]}{n\choose i}(p-1)^i &\leq (p-1)^{\delta n}\delta n{n\choose [\delta n]}\\
&\ll (p-1)^{\delta n}\delta n\frac{n^{\delta n}e^{\delta n}}{(\delta n)^{\delta n}}\ll (p-\frac{1}{2})^{\delta n}\left(\frac{e^\delta}{\delta^\delta}\right)^n\\
&= p^{\delta((\log p)^{-1}-\log\delta+1) n}\leq p^{\delta(3-\log\delta)n}.  
\end{align}
\end{proof}

\begin{proof}[Proof of Proposition \ref{p.MTech}]
Let $\mathcal N$ be the simplicial complex constructed by Gelander, with the property that $\pi_1(\mathcal N)\simeq \pi_1(M)$. Recall that there are constants $A,B$ dependent only on the symmetric space $X$ such that $\mathcal N$ has at most $A\Vol(M)$ vertices, all with degrees bounded by $B$. Let $C_1$ be as in Lemma \ref{l-cycleapprox}, let $\delta>\frac{1}{2}$ and let $\delta'>0$ be such that if we put $\delta''=2C_1\delta'/AB$ then $\delta''(3-\log \delta'')\leq \delta/2ABC_1$. Assume that $\ell^R(M)\leq \delta'$. By Lemma \ref{l-cycleapprox} every class in $H_1(M,\F_p)$ can be represented as a sum of at most $C_1\delta'\Vol(M)+ O(\Vol(M_{\leq R}))$  1-cells in $\mathcal N$. By \cite[Theorem 1.5]{7Sam} we have $\Vol(M_{\leq R})=o(\Vol(M))$. This is the only place where we use the assumption that ${\rm Isom}(X)$ has property $(T)$. Hence, for big enough $\Vol(M)$ every class in $H_1(M,\F_p)$ is represented by a sum of at most $2C_1\delta'\Vol(M)$ 1-cells in $\mathcal N$. Total number of $1$-cells in $\mathcal N$ is bounded by $AB\Vol(M)$. Applying lemma Lemma \ref{lestimate} with $n=AB\Vol(M)$ and $\delta''$ we deduce that the number of such representatives is bounded by $p^{\delta \Vol(M)}$. We infer that for $\Vol(M)$ big enough we have $\dim_{\F_p}H_1(M,\F_p)\leq\delta \Vol(M)$.  
\end{proof}
\section{Reduced representatives}\label{S2}
 The aim of this section is to show that the assumptions of Proposition \ref{p.MTech} are automatically satisfied once $\Vol(M)$ and $R$ are big enough:
\begin{proposition}\label{plength} There exists a positive constant $C_2$ such that for any $R>1$ and $Vol(M)$ big enough (depending on $R$) the following holds. Let $\alpha\in H_1(M,\F_2)$ and let $c\in Z_1(M,\F_2)$ be a reduced representative of $\alpha$. Then $\ell^R(c)\leq C_2\Vol(M)R^{-1/2}$. In particular, for $\Vol(M)$ big enough we have
$$\ell^R(M)\leq C_2 R^{-1/2}.$$
\end{proposition}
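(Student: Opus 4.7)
The plan is to show that a reduced representative of $\alpha$ is forced to occupy pairwise disjoint tubular regions whose total volume is bounded by $\Vol(M)$ and whose individual volumes grow like $R^{1/2}$ per unit geodesic length. Starting from Lemma \ref{lseparation}, I would write any reduced $c\in Z_1(M,\F_2)$ representing $\alpha$ as a formal $\F_2$-sum $c=\sum_i\gamma_i$ of closed geodesics, pairwise $\delta_0$-separated for some constant $\delta_0=\delta_0(X)>0$. The characteristic-$2$ assumption is essential here: if two geodesic components came within $\delta_0$ of each other, a homotopy inside a common totally geodesic flat would allow one to align them on a segment and cancel them modulo $2$, shortening $c$ and violating reducedness.

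Next, to each segment $\sigma$ of $\gamma_i\cap M_{\geq R}$ I would attach a ``local flat tube'' $T_\sigma$: a tubular neighborhood inside a totally geodesic flat of dimension at least $2$ containing a lift of $\sigma$, with transverse extent of order $R^{1/2}$. Because $\sigma$ sits in the $R$-thick part, such a neighborhood lifts isometrically to $M$ and has volume bounded below by a constant multiple of $\ell(\sigma)\cdot R^{1/2}$. The exponent $1/2$ rather than $1$ reflects that in the general higher-rank setting (where we have only Benjamini--Schramm control rather than a uniform injectivity radius lower bound), one can only propagate in a flat direction transverse to the geodesic out to distance of order $R^{1/2}$ before potentially losing embeddedness.

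The decisive and technically hardest step is to show that these tubes $T_\sigma$ are pairwise disjoint in $M$. Here I would invoke the repelling principle of Lemma \ref{lrepelling}: within each flat one may slide the geodesic transversally without changing its free homotopy class, so if two tubes overlapped we could slide the corresponding geodesics until they share an arc and cancel modulo $2$, again contradicting reducedness. Once disjointness is established, summing the volume lower bounds gives
\[
\ell^R(c)\cdot R^{1/2}\ \ll_X\ \sum_\sigma\Vol(T_\sigma)\ \leq\ \Vol(M),
\]
which rearranges to $\ell^R(c)\leq C_2\Vol(M)R^{-1/2}$, and the ``in particular'' clause follows by taking the supremum over $\alpha\in H_1(M,\F_2)$. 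The hypothesis that $\Vol(M)$ be large depending on $R$ enters only to absorb boundary contributions from the thin part and from a bounded number of possibly short or exceptional geodesic components in the decomposition of $c$.
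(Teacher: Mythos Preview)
Your outline matches the paper's proof: write the reduced representative as $c=\sum_i\gamma_i$ with $\gamma_i$ closed geodesics (Lemma~\ref{lexistence}), attach to each $\gamma_i$ a region $N(\gamma_i)\subset M_{\geq R}$ of volume $\gg \ell^R(\gamma_i)R^{1/2}$ (Lemma~\ref{lrepelling}), show the $N(\gamma_i)$ are pairwise disjoint (Corollary~\ref{cseparation}), and sum volumes.

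One point needs correcting, and it matters for the mechanism of the argument. The transverse extent $R^{1/2}$ is \emph{not} governed by embeddedness: inside $M_{\geq R}$ the flat patch embeds out to radius $R$, full stop. The $R^{1/2}$ comes instead from the length constraint $\ell(\gamma')<\ell(\gamma)+\kappa_2/2$ built into the definition of $N(\gamma)$. Detouring a geodesic segment of length $\sim R$ through a point at transverse distance $t$ in the flat costs $\sim t^2/R$ in length, so the reachable set is an ellipsoid with minor semi-axis $\sim R^{1/2}$ (this is exactly the paper's construction). This calibration is what makes the disjointness argument close: if $x\in N(\gamma_i)\cap N(\gamma_j)$, one slides each curve to within $\kappa_1/2$ of $x$ at total length cost $<\kappa_2$, and then the surgery of Lemma~\ref{lseparation} recoups $\kappa_2$, contradicting reducedness. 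With your stated reason for $R^{1/2}$, there would be no a priori bound on the sliding cost, and the contradiction would not follow. Also, the paper's proof does not actually use the hypothesis that $\Vol(M)$ be large; your proposed role for it is unnecessary here.
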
 We postpone the proof until the end of this section. Once we have Proposition \ref{plength} our main results are simple consequences: 
\begin{proof}[Proof of Theorem \ref{tmainL}]
Let $R>1$. It will be enough to prove that there exists a constant $C_3>0$ independent of $R$ such that for  $\Vol(M)$ big enough we have $\ell(\alpha)\leq C_3 \Vol(M)R^{-1/2}$ for any homology class $\alpha\in H_1(M,\F_2)$. To prove this we will use Lemma \ref{l-cycleapprox} to upgrade the inequality on $\ell^R(\alpha)$ to an inequality on $\ell(\alpha)$.  
Let $\mathcal N$ be a Gelander complex for $M$. It is constructed as a nerve complex of a cover $\mathcal U$ of a subset of $M$ by balls of radius $\kappa$. In the construction \cite{Gelander2} the constant $\kappa$ (denoted there as $\alpha$) depends only on $X$. The 1-skeleton $\mathcal N^1$ has a natural graph structure $\mathcal N^1=(V,E)$ whith vertex set $V=\{v_U| U\in \mathcal U\}$ indexed by the opens set in the good cover $\mathcal U$  and the edge set $E=\{e_{U,V}\mid U,V\in\mathcal{U}, U\cap V\neq \emptyset\}$. We have
$$\mathcal{N}^1=\left(\bigsqcup_{U} v_U\sqcup \bigsqcup_{U\cap V\neq \emptyset} [0,1]\times e_{U,V}\right)/\sim, $$ where $\sim$ is the equivalence relation given by $v_U\sim (0,e_{U,V}), v_V\sim (1,e_{U,V})$ and $(t,e_{U,V})\sim (1-t,e_{V,U})$ for $t\in [0,1]$. We endow $\mathcal{N}^1$ with the graph metric where edges are of length $1$. We are going to construct an explicit $2\kappa$-Lipschitz map $\tau\colon\mathcal N^1\to \Gamma\bs X$ inducing a surjective map on the fundamental groups.
For every set $U\in \mathcal U$ choose a point $x_U\in U$ and for every pair $U,V$ with non trivial intersection we choose a path $p_{U,V}:[0,1]\to U\cup V$ connecting $x_U$ and $x_V$. The diameters of the sets in $\mathcal U$ are bounded by $\kappa$ so we can choose patches $p_{U,V}$ of length less than $2\kappa$. We define a map $\tau:\mathcal N^1\to M$ by putting $\tau(v_U)=x_U$ and $\tau(t,e_{U,V})=p_{U,V}(t)$. This map is $2\kappa$-Lipschitz and $\pi_1(\tau)\colon \pi_1(\mathcal{N}^1)\to \pi_1(\Gamma\bs X)$ is surjective.

By Lemma \ref{l-cycleapprox} and Proposition \ref{plength}, if $\Vol(M)$ is big enough then the class $\alpha\in H^1(M,\F_2)$ is represented by an integral combination $c_0:=\sum_{i\in I}a_i e_i,$ where $e_1$ are edges of $\mathcal{N}$ and $\sum_{i\in I}|a_i|\leq C_1C_2\Vol(\Gamma\bs X)R^{-1/2} + O(\Vol(M_{<R})$. The map $\tau$ is $2\kappa$-Lipshitz so the image  $\tau(c_o)\in Z_1(M,\F_2)$ satisfies $$\ell(\tau(c_0))\leq 2\kappa C_1C_2\Vol(M)R^{-1/2} +O(\Vol(\Gamma\bs X)_{<R}).$$ By \cite[Theorem 1.5]{7SamN} $\Vol(M_{<R})=o(\Vol(M))$ so for $\Vol(M)$ big enough we will have $\ell(\tau(c_0))\leq C_3\Vol(\Gamma\bs X)R^{-1/2}$ with $C_3=2\kappa(C_1C_2+1)$.

%Choose $\varepsilon_2<\injrad(M)/3$. Let $B$ be a maximal $\varepsilon_2$ separated set in $M$. Consider the open cover $\mathcal U=\{B(x,2\varepsilon_2)|x\in B\}$ of $M$. At small scales $M$ look like an euclidean plane or half-plane if we are close to the boundary. It follows that for $\varepsilon_2$ small enough the intersections of balls in $\mathcal U$ are contractible. Let $B_{<R}=B\cap M_{<R}$ and $We can decompose $c$ as a sum of closed geodesics $c=\sum_{i=1}^k \gamma_i$ 
\end{proof}
\begin{proof}[Proof of Theorem \ref{tmain}]
Let $\delta>0$. By Proposition \ref{p.MTech} there exists $\delta'$ such that $\dim_{\F_2}H_1(M,\F_2)\leq \delta \Vol(M)$ for $\Vol(M)$ big enough and $M$ such that $\ell^R(M)\leq \delta' \Vol(M)$. Pick $R\geq (\delta')^{-2}C_3^2$. By Proposition \ref{plength} we have $\ell^R(M)\leq \delta'$ so 
$$\limsup_{\Vol(M)\to\infty} \frac{ \dim_{\F_2}H_1(M,\F_2)}{\Vol(M)}\leq \delta.$$
To get the Theorem we let $\delta$ go to $0$.
\end{proof}  
Fix $R>1$. Recall that a reduced representative of a homology class $\alpha\in H_1(M,\F_p)$ is a cycle $c\in Z_1(M,\Z)$ such that $\ell(c)=\ell(\alpha)$. Standard compactness argument yields 
\begin{lemma}\label{lexistence}
Every class $\alpha\in H_1(M,\F_2)$ has a reduced representative. It is an $\F_2$-combination of closed geodesics. In general it is not unique.
\end{lemma}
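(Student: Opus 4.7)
The plan is to establish existence of a minimizer via a standard compactness argument, and to read off the geometric form of such a minimizer directly from the minimality. The argument has three stages: reduce an arbitrary cycle to an $\F_2$-sum of closed loops without changing length, replace each loop by a closed geodesic in its free homotopy class without increasing length, and extract a limit from a minimizing sequence.

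First, any cycle $c = \sum_{i\in I} a_i\gamma_i \in Z_1(M,\F_2)$ with smooth paths $\gamma_i$ (and $a_i=1$, since the nonzero coefficient in $\F_2$ is $1$) can be rewritten, with the same total length and the same class in $H_1(M,\F_2)$, as a cycle of the form $c' = \sum_j \sigma_j$ where each $\sigma_j$ is a closed loop. Indeed, the cycle condition $\partial c = 0$ modulo $2$ says that at every point of $M$ the number of path-endpoints lying there is even, so one pairs them up arbitrarily and concatenates. Next, each loop $\sigma_j$ is replaced by a closed geodesic $\bar\sigma_j$ in its free homotopy class: since $M$ is compact and nonpositively curved, every nontrivial free homotopy class of loops contains a length-minimizing representative, which is automatically a closed geodesic, while trivial classes can be contracted to a point. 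Freely homotopic loops are homologous, so the chain $c'' = \sum_j \bar\sigma_j$ still represents $\alpha$ and $\ell(c'')\leq \ell(c')$. In particular $\ell(\alpha)$ is the infimum of $\ell(c)$ over $\F_2$-sums of closed geodesics representing $\alpha$.

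Next I would take a minimizing sequence $c_n = \sum_{j\in J_n}\bar\sigma_{n,j}$ of $\F_2$-sums of closed geodesics with $\ell(c_n)\to \ell(\alpha)$. Positivity of the injectivity radius of the compact manifold $M$ (or the Margulis lemma applied to $\Gamma$) gives a lower bound $r_0>0$ on the length of any closed geodesic in $M$, so $|J_n|\leq (\ell(\alpha)+1)/r_0$ is uniformly bounded, and after extraction $|J_n|=N$. Each individual geodesic has length at most $\ell(\alpha)+1$, and the space of parametrized closed geodesics of length at most a given constant in $M$ is compact in the $C^0$-topology by Arzel\`a--Ascoli, using that a closed geodesic is determined by its $1$-jet at any point. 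After a further extraction, $\bar\sigma_{n,j}$ converges uniformly to a closed geodesic $\bar\sigma_{\infty,j}$. For $n$ large, each $\bar\sigma_{n,j}$ lies in a tubular neighborhood of $\bar\sigma_{\infty,j}$ and is freely homotopic to it; hence $c_\infty := \sum_j \bar\sigma_{\infty,j}$ is homologous to $c_n$, represents $\alpha$, and attains total length $\ell(\alpha)$.

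The main subtle point is the compactness step, because in higher rank the symmetric space $X$ contains positive-dimensional flats, so closed geodesics can come in continuous families of the same length within a single free homotopy class; this does not obstruct the extraction of a uniform limit, but it is precisely what is responsible for the failure of uniqueness asserted in the lemma. If $M$ contains a closed flat of dimension at least $2$, then any parallel translate of a closed geodesic inside that flat yields another minimizing representative of the same class, so the reduced representative is in general far from unique.
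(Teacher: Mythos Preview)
Your argument is correct and is exactly the standard compactness argument the paper invokes; the paper itself offers no proof beyond the sentence ``Standard compactness argument yields'' immediately preceding the lemma. Your three-step elaboration (decompose a cycle into closed loops, tighten each loop to a closed geodesic via nonpositive curvature, then extract a limit from a minimizing sequence using Arzel\`a--Ascoli together with the bound on the number of components coming from the injectivity radius) is the natural way to flesh this out, and your explanation of nonuniqueness via parallel translates inside a flat matches the picture the paper uses later.
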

\begin{remark} We could define reduced representatives with respect to the $R$-length but as it will turn out, any reduced representative $c$ already satisfies $\ell^R(c)=o(Vol(M))$. This is already enough to show that $\ell^R(M)$ tends to $0$ as $\Vol(M)\to\infty$.
\end{remark}%\commentmikolaj{Probably I could work with R-reduced representatives. Maybe it would be more natural but I doubt the proof would be shorter...}
From now on it will be important that we work with $p=2$. Being a reduced representative of a mod$-2$ homology class forces strong geometric constraints on $c$.  The following Lemma guarantees that whenever a cycle $c$ has two geodesic components that are not $\kappa_1-$separated in the thick part $M_{>R}$ then there is a mod$-2$ homologous cycle $c'$ with $\ell(c')\leq \ell(c)-\kappa_2$ for some positive constant $\kappa_2$.  We will write $[a,b]$ for the shortest geodesic connecting points $a$ and $b$ and $B_M(x,r),S_M(x,r)$ for the ball and sphere of radius $r$ around $x$.
\begin{lemma}\label{lseparation}
There exist $\kappa_1,\kappa_2>0$ with the following property. For any two closed, non-contractible curves $\gamma_1,\gamma_2$ on $M$ such that $d_{M_{\geq R}}(\gamma_1,\gamma_2)\leq \kappa_1$ there exists a cycle $c\in Z_1(M,\F_2)$ such that $\ell(c)\leq \ell(\gamma_1)+\ell(\gamma_2)-\kappa_2$ and $[c]= [\gamma_1+\gamma_2]$ in $H_1(M,\F_2)$. 
\end{lemma}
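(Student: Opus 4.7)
The plan is to exploit $\F_2$-cancellation: if one homotopes $\gamma_2$ to a curve $\tilde\gamma_2^*$ that shares a short geodesic arc with $\gamma_1$, then in the $\F_2$-chain $\gamma_1+\tilde\gamma_2^*$ that arc appears twice and cancels, so the resulting cycle is strictly shorter than $\ell(\gamma_1)+\ell(\gamma_2)$.

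First I would pick close points $p_1\in\gamma_1\cap M_{\geq R}$ and $p_2\in\gamma_2\cap M_{\geq R}$ realizing $d(p_1,p_2)\leq\kappa_1$; for $\kappa_1$ sufficiently small compared to $R$, say $\kappa_1<R/10$, the ball $B:=B_M(p_1,10\kappa_1)$ lifts isometrically to $X$. Parametrize $\gamma_i$ by arc length with $\gamma_i(0)=p_i$ and set $a_i:=\gamma_i(-\epsilon)$, $b_i:=\gamma_i(\epsilon)$ with $\epsilon:=2\kappa_1$. Inside $B$ I first \emph{straighten} each curve by replacing $\gamma_i|_{[-\epsilon,\epsilon]}$ with the geodesic chord $[a_i,b_i]$; the result $\tilde\gamma_i$ satisfies $\ell(\tilde\gamma_i)\leq\ell(\gamma_i)$ and $[\tilde\gamma_i]=[\gamma_i]$ in $H_1(M,\F_2)$ because the homotopy is supported in the contractible ball $B$. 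After straightening, $p_i$ is literally the midpoint of the chord $[a_i,b_i]$. Next I homotope $\tilde\gamma_2$ to a curve $\tilde\gamma_2^*$ whose portion in $B$ contains the chord $\tilde\alpha_1:=[a_1,b_1]$ of $\tilde\gamma_1$ --- there are two valid choices, corresponding to the two orientations that become equivalent in $\F_2$:
\[
\mathrm{(A)}:\ [a_2,a_1]\,*\,\tilde\alpha_1\,*\,[b_1,b_2],\qquad \mathrm{(B)}:\ [a_2,b_1]\,*\,\tilde\alpha_1^{-1}\,*\,[a_1,b_2].
\]
Set $c:=\tilde\gamma_1+\tilde\gamma_2^*\in Z_1(M,\F_2)$; then $[c]=[\gamma_1+\gamma_2]$, the arc $\tilde\alpha_1$ cancels mod $2$, and an easy computation yields
\[
\ell(c)=\ell(\gamma_1)+\ell(\gamma_2)-4\epsilon+D,
\]
with $D=d(a_1,a_2)+d(b_1,b_2)$ in option $\mathrm{(A)}$ and $D=d(a_1,b_2)+d(b_1,a_2)$ in option $\mathrm{(B)}$.

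It remains to show that the smaller of the two options gives $D<4\epsilon$ with a uniform margin. In the Euclidean plane, a direct calculation that uses only that $p_i$ is the midpoint of $[a_i,b_i]$ and $|p_1-p_2|=\kappa_1$ gives the \emph{direction-independent} identity
\[
|a_1-a_2|^2+|a_1-b_2|^2+|b_1-a_2|^2+|b_1-b_2|^2=4\kappa_1^2+8\epsilon^2,
\]
so by Cauchy--Schwarz the minimum of the two pair-sums is at most $\sqrt{4\kappa_1^2+8\epsilon^2}$, which with $\epsilon=2\kappa_1$ equals $6\kappa_1<8\kappa_1=4\epsilon$. This yields the uniform saving $\kappa_2=2\kappa_1$. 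The main obstacle will be transferring this Euclidean estimate to the CAT(0) symmetric space $X$: the four points lie in a ball of radius $O(\kappa_1)$ in $X$, and a Taylor expansion of the exponential map at $p_1$ using the bounded sectional curvature of $X$ shows that the four cross-distances differ from their Euclidean analogues only by $O(\kappa_1^3)$ correction terms, so the saving is preserved provided $\kappa_1$ is chosen small enough relative to the curvature scale of $X$. The freedom to pick either $\mathrm{(A)}$ or $\mathrm{(B)}$ --- that is, the irrelevance of orientation in characteristic $2$ --- is essential: for nearly parallel chords only $\mathrm{(A)}$ gives a saving while $\mathrm{(B)}$ does not, and conversely for nearly anti-parallel ones, but the minimum of the two always works. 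This is the step of the proof in which the restriction to $p=2$ is crucially used.
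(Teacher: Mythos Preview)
Your approach coincides with the paper's: cut-and-reconnect surgery inside a small ball where the metric is nearly Euclidean, exploiting the $\F_2$-freedom to choose the shorter of the two reconnections. The paper takes the surgery points as exit points $p_i,q_i$ on a sphere $S(y,R')$ and is content with ``for $\kappa_1$ close to $0$ it is evident that one of those operations will reduce the total length''; your parallelogram identity makes that step quantitative, which is a genuine improvement in presentation.

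There is, however, a real gap. The sentence ``after straightening, $p_i$ is literally the midpoint of the chord $[a_i,b_i]$'' is false for general curves: since $a_i=\gamma_i(-\epsilon)$ and $b_i=\gamma_i(\epsilon)$ are defined by \emph{arc length}, one only knows $d(a_i,p_i),d(b_i,p_i)\le\epsilon$, and the chord midpoint $m_i$ can lie anywhere in $B(p_i,\epsilon)$. Plugging the honest bound $|m_1-m_2|\le 2\epsilon+\kappa_1$ into your identity yields $\min(D_A,D_B)\le\sqrt{4(2\epsilon+\kappa_1)^2+8\epsilon^2}$, and with $\epsilon=2\kappa_1$ this is $\sqrt{132}\,\kappa_1>8\kappa_1=4\epsilon$, so no saving. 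You cannot sidestep this by appealing to the application: in Corollary~\ref{cseparation} the lemma is invoked for the perturbed curves $\gamma'_i$ coming from Lemma~\ref{lrepelling}, which are broken geodesics whose corner sits precisely at the close point, so local geodesity near $p_i$ genuinely fails there. A clean patch is a preliminary dichotomy: if $d(a_i,b_i)\le 2\epsilon-\kappa_2$ for some $i$, then straightening that $\gamma_i$ alone already yields a homologous cycle shorter by $\kappa_2$; otherwise both chords have length at least $2\epsilon-\kappa_2$, which forces $d(m_i,p_i)\le\sqrt{\epsilon\kappa_2}$ (the lens $B(a_i,\epsilon)\cap B(b_i,\epsilon)$ is thin), hence $|m_1-m_2|\le\kappa_1+2\sqrt{\epsilon\kappa_2}$, and your identity then goes through for $\kappa_2$ a sufficiently small multiple of $\kappa_1$.
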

\begin{proof}
Let $x_1,x_2$ be points on $\gamma_1\cap M_{\geq R},\gamma_2\cap M_{\geq R}$ respectively, such that $d(\gamma_1,\gamma_2)=d(x_1,x_2)$. Let $y$ be the midpoint of the shortest path connecting $x_1,x_2$ in $M_{\geq R}$. Fix some radius $R'<1$ and consider the intersection of $\gamma_1,\gamma_2$ with $B_M(y,R')$. Note that $R'<R$ so the ball $B_M(y,R')$ is isometric to an $R'$-ball in $X$. Since neither of $\gamma_1,\gamma_2$ is contractible, they have non-empty intersection with $S_M(y,R')$. Let $p_i,q_i$ be intersection points of $\gamma_i$ with the sphere $S_M(y,R')$ such that $x_i$ lies on the segment of $\gamma_i$ bounded by $p_i$ and $q_i$, for $i=1,2$. 
\begin{figure}[H]
\includegraphics[height=2.2in]{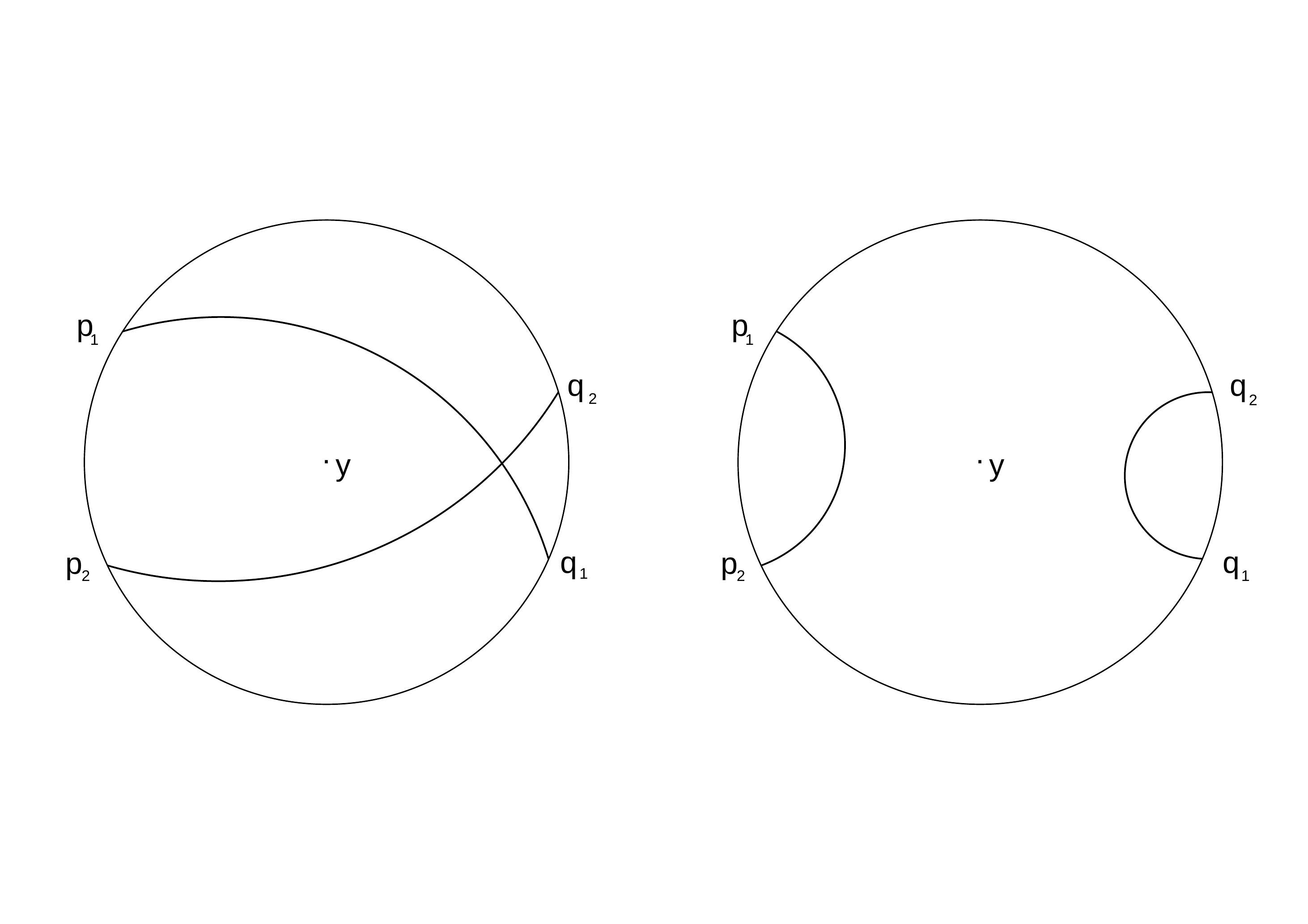}
\caption{Before (left) and after (right) a surgery on a pair of close curves in $B_M(y,R')$.}\label{fig1}  
\end{figure}

To construct the new cycle $c$ we replace segments $\gamma_1\cap B(y,R')$ and $\gamma_2\cap B(y,R')$ by geodesics $[p_1,p_2],[q_1,q_2]$ or $[p_1,q_2],[p_2,q_1]$ as shown in Figure \ref{fig1}. We always choose the pair with minimal total length. For $R'$ small enough (how small depending only on $X$) the metric inside $B(y,R')$ is close to the flat euclidean metric so for $\kappa_1$ close to $0$ it is evident that one of those operations will reduce the total length by at least $\kappa_2$ for some positive constant $\kappa_2$. Note that this surgery does not change the mod$-2$ homology class.
\end{proof}
The second lemma says that in higher rank we have a lot of freedom to perturb closed geodesics into curves with a minimal increase in length. From this point onward we assume for technical reasons that $R>2(1+\kappa_1+\kappa_2)$. This is not a problem since later we are going to let $R$ tend to infinity.
\begin{lemma}\label{lrepelling}
Let $\gamma$ be a closed geodesic on $M$. Let $\kappa_1,\kappa_2$ be as in Lemma \ref{lseparation}. Put 
$$N(\gamma)=\{x\in M_{\geq R}\mid \exists\gamma' \textrm{ curve homotopic to } \gamma \textrm{ such that } d(x,\gamma')< \kappa_1/2\textrm{ and }\ell(\gamma')<\ell(\gamma)+\kappa_2/2\}.$$
Then $\Vol(N(\gamma))\geq C_3 \ell^R(\gamma)R^{\frac{1}{2}}$ for some absolute positive constant $C_3$. 
\end{lemma}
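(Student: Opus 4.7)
The plan is to realise $\gamma$ as one member of a one-parameter family of parallel closed geodesics of identical length, supported in a totally geodesic $2$-flat, and then thicken this family in $M$ to fill up a set of volume $\gtrsim \ell^R(\gamma)\sqrt{R}$ contained in $N(\gamma)$. Concretely, I would lift $\gamma$ to a $g$-invariant axis $\tilde\gamma\subset X$, where $g\in\Gamma$ is the hyperbolic element representing $\gamma$. The higher rank hypothesis forces $\tilde\gamma$ to lie in a totally geodesic Euclidean $2$-plane $\tilde F\subset X$, and on $\tilde F$ the element $g$ acts as a translation by a vector of length $\ell(\gamma)$ parallel to $\tilde\gamma$. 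For any $\tilde y\in\tilde F$ the parallel line $\tilde\gamma_{\tilde y}\subset\tilde F$ is therefore again $g$-invariant; its projection $\gamma_{\tilde y}:=\pi(\tilde\gamma_{\tilde y})\subset M$ is a closed loop of length $\ell(\gamma)$ in the conjugacy class of $g$, hence freely homotopic to $\gamma$ and an admissible candidate for $\gamma'$ in the definition of $N(\gamma)$.

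The bulk of the work is a packing construction in the thick part of $\gamma$. I would pick centres $x_1,\dots,x_k$ on $\gamma$ with $\injrad(x_i)\geq R+\sqrt{R}$ and mutual arc-length distance $\geq 2\sqrt{R}$ along $\gamma$; the $1$-Lipschitz nature of $\injrad$ combined with a covering argument gives $k\gtrsim \ell^R(\gamma)/\sqrt{R}$ (up to a multiplicative constant absorbed into $C_3$). At each $x_i$ the metric ball $B(x_i,\sqrt{R})$ is isometric to a ball in $X$, so $D_i:=\pi(\tilde F\cap B(\tilde x_i,\sqrt{R}/2))$ is an embedded flat Euclidean $2$-disk in $M$ of area $\pi R/4$, and the different $D_i$'s lie in pairwise disjoint balls. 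Let $N_i$ be the $\kappa_1/4$-neighbourhood of $D_i$ in $M$; these sets are pairwise disjoint for $R$ large, contained in $M_{\geq R}$ because $\injrad(x')\geq R+\sqrt{R}-\sqrt{R}/2-\kappa_1/4\geq R$ for any $x'\in N_i$, and each such $x'$ lies within $\kappa_1/4$ of some $y\in D_i$, giving $\gamma':=\gamma_{\tilde y}$ with $d(x',\gamma')<\kappa_1/2$ and $\ell(\gamma')=\ell(\gamma)<\ell(\gamma)+\kappa_2/2$. Hence $N_i\subset N(\gamma)$.

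Finally, a Jacobi field comparison on the bounded tube of radius $\kappa_1/4$ around the totally geodesic $D_i$ yields $\Vol(N_i)\gtrsim R\cdot \kappa_1^{\dim X-2}$; summing over the pairwise disjoint $N_i$'s gives $\Vol(N(\gamma))\geq \sum_i\Vol(N_i)\gtrsim kR\gtrsim \ell^R(\gamma)\sqrt{R}$, which is the desired bound. The main obstacle I anticipate is exactly the thick-part bookkeeping behind the inequality $k\gtrsim \ell^R(\gamma)/\sqrt{R}$: in general the transition region $\{R\leq\injrad<R+\sqrt{R}\}$ could capture most of $\gamma\cap M_{\geq R}$, so one needs either an averaging over the choice of starting offset along $\gamma$ or a mild adjustment where $\ell^R(\gamma)$ on the right-hand side is replaced by $\ell^{R+\sqrt{R}}(\gamma)$, with the Benjamini--Schramm convergence used later to compare the two. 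A secondary subtlety is that if $\tilde F$ has a larger stabiliser in $\Gamma$ then $\gamma_{\tilde y}$ may be a multi-cover of a shorter geodesic, but this is harmless provided one treats $\gamma_{\tilde y}$ as a parametrised loop of length $\ell(\gamma)$ representing the conjugacy class of $g$.
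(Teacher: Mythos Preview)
Your argument has a genuine gap at the very first step: the claim that $g$ acts on the $2$-flat $\tilde F$ as a translation parallel to $\tilde\gamma$. Higher rank guarantees that the axis $\tilde\gamma$ lies in \emph{some} maximal flat, but it does \emph{not} guarantee that $g$ preserves any such flat. For a concrete counterexample take $G=\SL(3,\R)$ (rank $2$, fundamental rank $1$) and let $g\in\Gamma$ be a semisimple element with one real and two non-real complex conjugate eigenvalues. Then the min-set of $g$ on $X$ is a single geodesic and no $2$-flat through $\tilde\gamma$ is $g$-invariant; the parallel line $\tilde\gamma_{\tilde y}$ is not $g$-invariant, its projection is not a closed loop in the free homotopy class of $\gamma$, and your candidate $\gamma'=\gamma_{\tilde y}$ simply does not exist. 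What you have written is exactly the heuristic the paper sketches in the introduction under the extra hypothesis $\delta(G)\geq 2$; the paper then says explicitly that ``in the general case this argument is replaced by Lemma~\ref{lrepelling}''.

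The paper's proof avoids this obstruction by using only a \emph{local} patch of the flat and a different perturbation of $\gamma$. One picks an $R$-separated set $x_1,\dots,x_N$ on $\gamma\cap M_{>R}$ (so $N\gtrsim \ell^R(\gamma)/R$), lifts to points $\tilde x_i$ on $\tilde\gamma$, and for each $i$ fixes two points $\tilde x'_i,\tilde x''_i$ on $\tilde\gamma$ at distance roughly $R$ on either side of $\tilde x_i$. For any $y$ in the flat disk $F\cap B(\tilde x_i,R-\kappa_1/2)$ one builds $\gamma'$ by replacing the segment $[\tilde x'_i,\tilde x''_i]$ of $\gamma$ with the broken geodesic $[\tilde x'_i,y]\cup[y,\tilde x''_i]$; the length constraint $\ell(\gamma')<\ell(\gamma)+\kappa_2/2$ then cuts out an \emph{ellipse} $E_i\subset F$ with semiaxes of order $R$ and $\sqrt{R}$, hence area $\gtrsim R^{3/2}$. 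This construction never asks $g$ to preserve $F$. Thickening each $E_i$ by $\kappa_1/2$ and summing gives $\Vol(N(\gamma))\gtrsim N\cdot R^{3/2}\gtrsim \ell^R(\gamma)R^{1/2}$. Note incidentally that this also dissolves the bookkeeping obstacle you flag at the end: the centres $x_i$ are taken directly in $\gamma\cap M_{>R}$ with separation $R$ (not $\sqrt R$), so the count $N\gtrsim\ell^R(\gamma)/R$ needs no comparison of $\ell^R$ with $\ell^{R+\sqrt R}$.
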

\begin{proof}Write $\iota\colon X\to M$ for the covering map and $B(\Sigma,\varepsilon)$ for the open $\varepsilon$-neigborhood of a set $\Sigma$.
Let $x_1,\ldots,x_N$ be a maximal $R$-separated subset of $\gamma\cap M_{>R}$. Clearly we have $N\geq \ell^R(\gamma)/2R$. Choose a lift $\tilde\gamma$ of $\gamma$ to $X$ and let $\tilde x_i$ be lifts of $x_i$'s  lying on $\tilde{\gamma}$. There exits a maximal flat $F$ of $X$ containing $\tilde\gamma$\footnote{Flat $F$ does not necessarily descend to a closed flat on $M$.}. Since $X$ is a higher rank symmetric space $F$ is isometric to $\R^d$ with $d=\rank X\geq 2$ \cite[p.152]{GeomGT-NRW}. For every $i=1,\ldots, N$ let $F_i=F\cap B(x_i,R-\kappa_1/2)$ and $G_i=B(F_i,\kappa_1/2)$. Note that covering map $\iota\colon X\to M$ is injective on $G_i$ for every $i$ and the images of $G_i$'s via $\iota$ in $M$ are pairwise disjoint. Let $\tilde x'_i,\tilde x''_i$ be the pair of points on $\tilde\gamma$ at distance $R-\kappa_1/2-\kappa_2$ from $\tilde x_i$. Consider the open ellipsoid $E_i$ in $F_i$ defined as $E_i=\{y\in F_i\mid d(y,\tilde x'_i)+d(y,\tilde x''_i)< 2R+\kappa_2/2\}$ (see Figure \ref{fig2}).
\begin{figure}[H]
\includegraphics[scale=0.35]{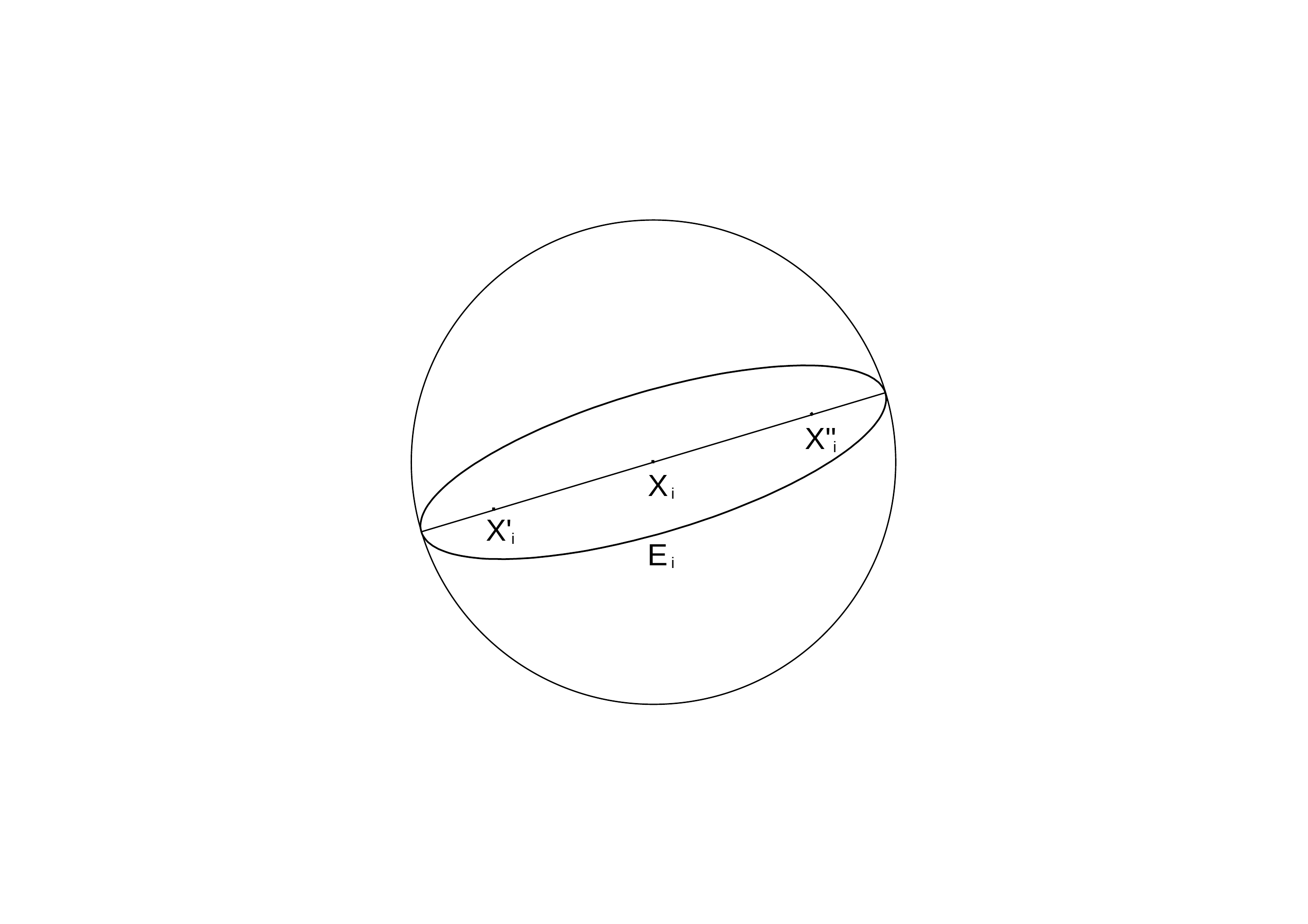}
\caption{Image of $E_i\subset F_i$ under the covering map.}\label{fig2}  
\end{figure} Note that for any point $y\in \iota(E_i)$  there exists a closed curve $\gamma'$ homotopic to $\gamma$ such that $\gamma'$ passes through $y$ and $\ell(\gamma')< \ell(\gamma)+\kappa_2/2$.  Put $H_i=B(E_i,\kappa_1/2)$. We have $H_i\subset G_i$ so the images of $H_i$ via $\iota$ are pairwise disjoint. Formula for the volume of an ellipsoid yields $\Vol(H_i)\geq C_4 R^{\frac{d+1}{2}}\geq C_4 R^{\frac{3}{2}}$ for some positive constant $C_4$ depending only on $X$ and $\kappa_1$. Hence $$\Vol(\bigsqcup_{i=1}^N H_i)\geq C_3\ell^R(\gamma)R^{\frac{1}{2}}.$$ By construction $\iota(H_i)\subset N(\gamma)$ for every $=1,\ldots, N$ which ends the proof.
\end{proof}
As a corollary of Lemmas \ref{lseparation} and \ref{lrepelling} we get
\begin{corollary}\label{cseparation}Let $\alpha\in H_1(M,\F_2)$ and let $c$ be a reduced representative of $\alpha$. Write 
$c=\sum_{i\in I}\gamma_i$. Then the sets $N(\gamma_i), i\in I$ defined as in Lemma \ref{lrepelling} are pairwise disjoint.
\end{corollary}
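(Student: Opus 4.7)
The plan is a short proof by contradiction pairing the two preceding lemmas. Suppose, toward contradiction, that there exist distinct $i,j\in I$ with $N(\gamma_i)\cap N(\gamma_j)\neq\emptyset$, and pick a point $x$ in the intersection. Unpacking the defining condition of $N(\gamma_k)$ at $x$ yields curves $\gamma_i',\gamma_j'$ homotopic respectively to $\gamma_i$ and $\gamma_j$, each passing within distance $\kappa_1/2$ of $x$ and satisfying $\ell(\gamma_k')<\ell(\gamma_k)+\kappa_2/2$ for $k\in\{i,j\}$.

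Next I would feed the pair $\gamma_i',\gamma_j'$ into Lemma \ref{lseparation}. Both curves are non-contractible, since each is homotopic to a closed geodesic, and the buffer in the standing assumption $R>2(1+\kappa_1+\kappa_2)$ guarantees that their nearest points to $x$ lie in $M_{\geq R-\kappa_1/2}$, i.e.\ inside the thick region where the surgery of Lemma \ref{lseparation} is valid. The lemma then produces a cycle $c_0\in Z_1(M,\F_2)$ with
$$[c_0]=[\gamma_i'+\gamma_j']=[\gamma_i+\gamma_j]\quad\text{in}\quad H_1(M,\F_2),$$
satisfying
$$\ell(c_0)\leq \ell(\gamma_i')+\ell(\gamma_j')-\kappa_2<\ell(\gamma_i)+\ell(\gamma_j).$$
Replacing the pair of summands $\gamma_i+\gamma_j$ in $c$ by $c_0$ yields a new $\F_2$-cycle $c'=c_0+\sum_{k\in I\setminus\{i,j\}}\gamma_k$ still representing $\alpha$, but of strictly smaller total length, contradicting the assumption that $c$ is a reduced representative.

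The only step that could plausibly fail without care is the thick-part bookkeeping in the previous paragraph: one must verify that the close approach of $\gamma_i'$ and $\gamma_j'$ to $x$ really takes place in a region with injectivity radius at least the threshold demanded by Lemma \ref{lseparation}. This is precisely what the explicit buffer condition $R>2(1+\kappa_1+\kappa_2)$ introduced just before Lemma \ref{lrepelling} was built to ensure, so the verification is mechanical and no new idea is needed.
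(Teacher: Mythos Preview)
Your proof is correct and follows essentially the same argument as the paper: argue by contradiction, use the definition of $N(\gamma_i)$ to produce homotopic curves $\gamma_i',\gamma_j'$ passing within $\kappa_1/2$ of a common thick point with length increase less than $\kappa_2/2$ each, then apply Lemma~\ref{lseparation} to shorten their sum by $\kappa_2$, contradicting minimality of $c$. Your explicit remark about the thick-part bookkeeping (verifying that the surgery ball lies where Lemma~\ref{lseparation} applies) is a point the paper passes over quickly, but the argument is the same.
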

\begin{proof}
We argue by contradiction. Let $\gamma_1,\gamma_2$ be two geodesic components of $c$ such that $N(\gamma_1)\cap N(\gamma_2)\neq \emptyset$. Let $y\in N(\gamma_1)\cap N(\gamma_2)$. By definition $y\in M_{>R}$ and there exist closed curves $\gamma'_1,\gamma'_2$ homotopic to $\gamma_1,\gamma_2$ respectively such that $\ell(\gamma'_i)< \ell(\gamma_i)+\kappa_2/2$ and $d(\gamma'_i,y)<\kappa_1/2$ for $i=1,2$. Let $c'$ be the cycle obtained from $c$ by replacing $\gamma_i$ by $\gamma'_i$ for $i=1,2$. It represents the same homology class. We have $\ell(c')<\ell(c)+\kappa_2$. Curves $\gamma'_1,\gamma'_2$ satisfy $d_{M_{>R}}(\gamma'_1,\gamma'_2)<\kappa_1$ so we may perform the surgery from Lemma \ref{lseparation} to construct homologous cycle $c''$ with $\ell(c'')\leq \ell(c')-\kappa_2<\ell(c)$. The last inequality contradicts the assumption that $c$ was a reduced representative.
\end{proof}

\begin{proof}[Proof of Proposition \ref{plength}]
Let $\alpha\in H_1(M,\F_2)$ and let $c$ be a reduced representative of $\alpha$. Then 
$c=\sum_{i\in I}\gamma_i$ is a combination of closed geodesics indexed by some set $I$. By Corollary \ref{cseparation} the sets $N(\gamma_i), i\in I$ are pairwise disjoint and by Lemma \ref{lrepelling} we have $\Vol(N(\gamma_i))\geq C_3\ell^R(\gamma_i)R^{\frac{1}{2}}$ so $$\ell^R(c)=\sum_{i\in I}\ell^R(\gamma_i)\leq \frac{\Vol(M_{>R})}{C_3 R^{\frac{1}{2}}}.$$ We deduce that $\ell^R(M)\leq C_3^{-1}R^{-\frac{1}{2}}$.
\end{proof}
\section{The non-archimedean case}\label{S3}
Let $X=$ be the Bruhat-Tits building of a semisimple algebraic group $G$ defined over a non-archimedean local field $F$. We say that $X$ is higher rank if the dimension of $X$ is at least $2$ (equivalently the $F$-rank of $G$ is at least $2$). In this section we adapt the proof of the archimedean case of Theorem \ref{tmain} to show the following:
\begin{theorem}\label{t.mNonarch}
Let $X$ be a higher rank building and let $(\Gamma_n)_{n\in\N}$ be a sequence of irreducible, torsion-free lattices in $G$ such that $(\Gamma_n\bs X)_{n\in\N}$ converges to $X$ in Benjamini-Schramm topology. Then 
$$\lim_{n\to\infty}\frac{\dim_{\F_2}H_1(\Gamma_n\bs X,\F_2)}{\Vol(\Gamma_n\bs X)}=0.$$
\end{theorem}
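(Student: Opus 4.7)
The plan is to mirror the two-step strategy of Sections \ref{S1} and \ref{S2}, but with two simplifications: the Gelander nerve complex is replaced by the natural simplicial structure of the quotient $\Gamma_n\bs X$ itself, and Benjamini-Schramm convergence of $(\Gamma_n\bs X)$ to $X$ is taken as a hypothesis rather than being deduced from property $(T)$. Since $X$ is a Bruhat-Tits building, $\Gamma_n\bs X$ is already a simplicial complex with vertex degrees bounded by a constant $B=B(X)$ and number of vertices $\ll_X \Vol(\Gamma_n\bs X)$. A cycle $c\in Z_1(\Gamma_n\bs X,\F_2)$ is taken to be an $\F_2$-combination of edges, $\ell(c)$ is the number of edges in $c$ (counted mod $2$), and $\ell^R(c)$ counts only edges contained in the thick part $M_{\geq R}$.

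First I would reprove the analog of Proposition \ref{p.MTech}. The analog of Lemma \ref{l-cycleapprox} is essentially automatic: any $\alpha\in H_1(\Gamma_n\bs X,\F_2)$ is represented by a sum of at most $\ell^R(\alpha)+O(\Vol(M_{\leq R}))$ edges of $\Gamma_n\bs X$, since short chains in the thick part are already simplicial and the thin part contributes at most its edge count. Assuming $\ell^R(M)\leq\delta'$ and using $\Vol(M_{\leq R})=o_R(\Vol(M))$ from Benjamini-Schramm convergence, the same application of Lemma \ref{lestimate} to subsets of the $\ll_X \Vol(M)$ edges of $\Gamma_n\bs X$ yields $\dim_{\F_2}H_1(\Gamma_n\bs X,\F_2)\leq \delta\Vol(\Gamma_n\bs X)$ once $\Vol(\Gamma_n\bs X)$ is large enough.

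The substantive work is transporting Proposition \ref{plength} and its two supporting lemmas to the combinatorial setting. A reduced representative $c$ of $\alpha\in H_1(\Gamma_n\bs X,\F_2)$ decomposes as an $\F_2$-sum of closed combinatorial geodesics in the CAT(0) building. For the analog of Lemma \ref{lseparation}, I would argue that if two components $\gamma_1,\gamma_2$ of $c$ come within combinatorial distance $\kappa_1$ of each other in $M_{\geq R}$, then near a midpoint there is a ball isometric to a ball in $X$ inside which the CAT(0) inequality lets one swap two short subarcs of $\gamma_1,\gamma_2$ for two strictly shorter edge-paths, contradicting the minimality of $c$. For the analog of Lemma \ref{lrepelling}, the higher-rank hypothesis $\dim X\geq 2$ is used by lifting each closed combinatorial geodesic $\gamma$ to a flat $F$ of an apartment, isomorphic to $\R^d$ with $d\geq 2$, and tiling an $\ell^R(\gamma)$-long portion of $\gamma$ by $\gg \ell^R(\gamma)/R$ essentially disjoint ellipsoidal neighborhoods in $F$, each of combinatorial volume $\gg R^{(d+1)/2}\geq R^{3/2}$, whose projections to $M$ are pairwise disjoint. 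The combinatorial analog of Corollary \ref{cseparation} then gives $\ell^R(M)\leq C R^{-1/2}$, and Theorem \ref{t.mNonarch} follows from the combinatorial Proposition \ref{p.MTech} exactly as Theorem \ref{tmain} followed from its archimedean counterpart.

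The main obstacle I expect is the surgery step in the analog of Lemma \ref{lseparation}: small neighborhoods in a Bruhat-Tits building are not flat and the vertices are discrete, so the archimedean "pick two endpoints on a sphere and reconnect" picture does not transfer literally. The right formulation should replace the Euclidean argument by a CAT(0) argument inside a common apartment containing suitable subarcs of $\gamma_1$ and $\gamma_2$, using that any two points of a building are joined by a geodesic that lies in some apartment, and that for $\kappa_1$ small relative to $R$ the pairs of close subarcs do admit such a common apartment. Once this combinatorial surgery is set up, the ellipsoid volume estimate in the repelling lemma carries over because the number of vertices inside a Euclidean ellipsoid of an apartment grows polynomially in its half-axes with the same exponent as in the archimedean case, so the volume comparison that closes Proposition \ref{plength} remains valid.
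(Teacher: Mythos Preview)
Your overall structure is right, and the paper agrees that for buildings the Gelander complex can be replaced by the native simplicial structure and that Proposition \ref{p.MTech} goes through essentially unchanged. The gap is exactly at the surgery step, and your proposed fix does not work.

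You suggest that for two close geodesic subarcs one can always find a common apartment containing them and then run the Euclidean cut-and-reconnect there. The paper gives an explicit counterexample showing this is false: take two apartments $A_1,A_2$ in the building of $\SL(3,\Q_p)$ whose intersection is a single bi-infinite geodesic $\gamma$, pick $p\in\gamma$, and let $c_i\subset A_i$ be the geodesic through $p$ orthogonal to $\gamma$. Then $c_1$ and $c_2$ actually intersect (distance zero), yet there is no way to cut and reconnect them that locally decreases length, and no apartment contains both near $p$. So the direct analog of Lemma \ref{lseparation} is simply false for buildings, not merely harder to prove.

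The paper's remedy is a pigeonhole upgrade: since only finitely many local apartment pictures occur in a ball of fixed radius (bounded geometry), there is a constant $C_4=C_4(X)$ such that if \emph{at least} $C_4$ geodesics pass through a small ball, two of them must share the same local apartment slice, and inside that Euclidean slice the archimedean surgery from Lemma \ref{lseparation} applies. Consequently the disjointness in Corollary \ref{cseparation} is weakened to a bounded-multiplicity statement: every point of $M_{>R}$ lies in at most $C_4$ of the sets $N'(\gamma_i)$. This still yields $\ell^R(c)\ll_X \Vol(M_{>R})R^{-1/2}$, just with an extra factor $C_4$, and the rest of your argument then goes through. Replace your two-geodesic surgery claim by this $C_4$-pigeonhole version and the proof is complete.
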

We remark that recently Gelander and Levit \cite{LevitGelander} proved that if $G$ is higher rank non-archimedean semisimple algebraic group with property $(T)$ then every non-stationary sequence complexes $(\Gamma_n\bs X)_n$ converges Benjamini-Schramm to $X$, so the non-Archimedean case of Theorem \ref{tmain} follows.
 
Let us indicate why the proof of the archimedean case does not work for buildings. The reason for this is that Lemma \ref{lseparation} fails in the non-archimedean setting. Consider the following example: let $X$ be the building of $\SL(3,\Q_p)$ and let $A_1,A_2$ be two apartments positioned with respect to each other so that $\gamma:=A_1\cap A_2$ is an infinite geodesic. Pick a point $p$ on $\gamma$ and let $c_i$ (for $i=1,2$) be the geodesic in $A_i$ passing through $p$ and orthogonal to $\gamma$. Even though $c_1,c_2$ intersect non-trivially in $p$
there is no way of cutting and reconnecting $c_1$ and $c_2$ which locally reduces the length. To deal with this issue we will need another version of Lemma \ref{lseparation} which will tell us that if a large number of geodesics intersect a small ball, then we can choose two for which there is a way of cutting and reconnecting that reduces total length by at least $\kappa_2>0$. Let $M=\Gamma\bs X$ (this time the quotient is always compact). We adopt all the notation from the previous sections.
\begin{lemma}\label{lseparation2}
Fix $R\geq 2$. There exist constants $C_4,\kappa_1,\kappa_2>0$ with the following property. Let $x\in M_{>R}$ and let $\gamma_1,\ldots \gamma_m, m\geq C_4$ be a collection of curves passing through $B(x,\kappa_1/2)$ such that $\gamma_i\cap B(x,2)$ is geodesic for $i=1,\ldots, m$. Then there exists $1\leq i<j\leq m$ and a cycle $c_{i,j}\in Z_1(M,\F_2)$ such that $\ell(c_{i,j})\leq \ell(\gamma_i)+\ell(\gamma_j)-\kappa_2$ and $[c_{i,j}]= [\gamma_i+\gamma_j]$ in $H_1(M,\F_2)$. 
\end{lemma}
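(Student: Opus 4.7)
My plan is to recover the Euclidean surgery of Lemma~\ref{lseparation} by a pigeonhole on the local combinatorial structure of the building. The obstruction discussed before Lemma~\ref{lseparation2} is that a small ball in a building need not be approximately Euclidean: two transverse geodesics can lie in apartments whose intersection is only a line, and no local swap reduces their total length. But if enough geodesic segments pass through a single small ball, two of them must lie in a common apartment near $x$, and inside an apartment the building is honestly Euclidean so the argument of Lemma~\ref{lseparation} applies unchanged.

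First I would lift to the cover. Pick $\tilde x\in X$ above $x$; since $x\in M_{>R}$ with $R\geq 2$, the projection $X\to M$ is an isometry on $B(\tilde x,2)$, and each $\gamma_i$ admits a lift $\tilde\gamma_i$ meeting $B(\tilde x,\kappa_1/2)$. By local finiteness of the Bruhat--Tits building, only finitely many chambers meet $B(\tilde x,2)$ and each chamber has bounded valence in the chamber graph; hence the number $T=T(X)$ of galleries realisable by a geodesic segment of length at most $4$ through $B(\tilde x,2)$ is bounded by a constant depending only on $G$ and on the residue field of $F$. Set $C_4=T+1$. Pigeonhole yields $i<j$ such that $\tilde\gamma_i\cap B(\tilde x,2)$ and $\tilde\gamma_j\cap B(\tilde x,2)$ traverse the same gallery $C_0,\ldots,C_k$. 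By the apartment axiom this gallery sits in an apartment $A\cong\R^d$; and since each restriction $\tilde\gamma_l\cap C_s$ is a straight segment inside the convex chamber $C_s\subset A$ meeting $C_{s+1}$ across the shared wall $C_s\cap C_{s+1}\subset A$, the two full segments lie inside $A$.

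Inside the Euclidean apartment $A$ the two segments are distinct straight lines each passing through $B(\tilde x,\kappa_1/2)$, so they either intersect transversally near $\tilde x$ or are parallel at mutual distance at most $\kappa_1$. In either case the swap construction from the proof of Lemma~\ref{lseparation}, carried out in a small ball around $\tilde x$ inside $A$, produces a cycle $c_{i,j}$ that is mod-$2$ homologous to $\gamma_i+\gamma_j$ and satisfies $\ell(c_{i,j})\leq \ell(\gamma_i)+\ell(\gamma_j)-\kappa_2$ for a positive $\kappa_2=\kappa_2(\kappa_1,X)$; convexity of $A$ in the CAT(0) space $X$ keeps the short connecting geodesics of the swap inside $A$, so the Euclidean saving is the honest saving in $X$. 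The main delicate step is the combinatorial pigeonhole: one needs $T(X)$ uniform in $x$ (which follows since the full local structure of $X$ around $\tilde x$ is visible in $B(\tilde x,2)$), and one must handle non-generic geodesics running along walls or through vertices --- this is done by defining the gallery as the sequence of closed chambers intersected in a set of positive length, which still fits into a single apartment by the standard axioms.
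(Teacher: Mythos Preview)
Your proof is correct and follows essentially the same strategy as the paper: lift to $B_X(\tilde x,2)$, pigeonhole on the finite local combinatorics to force two geodesic segments into a common Euclidean piece, then invoke the surgery of Lemma~\ref{lseparation}. The only difference is the choice of pigeonhole invariant. The paper, rather than tracking the gallery traversed, simply chooses for each $i$ an apartment $A_i\supset\tilde\gamma_i$ and pigeonholes on the slice $L_i=A_i\cap B_X(\tilde x,1)$; since $X$ has bounded geometry there are only boundedly many such slices, and once $L_i=L_j=:L$ both segments lie as straight lines in the Euclidean disk $L$. This sidesteps exactly the edge case you flag as delicate (geodesics running along walls, where the ``gallery'' is not well-defined as a sequence of adjacent chambers), so you may prefer to swap in this cleaner invariant.
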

\begin{proof}
 There is only finitely many ways the apartments in $X$ can look like in a ball of radius $2$. The idea is to use this observation and apply the pigeonhole principle to reduce the proof of Lemma \ref{lseparation2} to the archimedean case. 

Let $\kappa_1,\kappa_2$ be as in Lemma \ref{lseparation}. For a technical reason we want $\kappa_1<1$. This is not a problem since we can always take smaller $\kappa_1$. Choose a lift $\tilde x\in X$ of $x$. We have an isometry $B_M(x,2)\simeq B_X(\tilde x,2)$. Let $\tilde\gamma_i$ be the lift of $\gamma_i$ which intersects $B_X(\tilde x,\kappa_1/2)$. From this point onward we forget about $M$ and work in $B(\tilde x,2)$. Every manipulation we do in this ball descends to $M$. For $i=1,\ldots,m$ choose an apartment $A_i$ containing $\tilde\gamma_i$ and let $L_i=A_i \cap B_X(\tilde x,1)$. Since $X$ is a (poly)simplicial complex of bounded geometry there is a uniform upper bound $C_4=C_4(X)$ on the number of possible intersections of an apartment with $B(x,2)$ so the number of distinct $L_i$ is bounded by $C_4$. Since $m>C_4$, by pigeonhole principle there are $1\leq i<j\leq m$ such that $L_j=L_i:=L$. The set $L$ is isometric to an Euclidean ball of dimension at least $2$ with radius at least $2-\kappa_1/2\geq 1$. Curves $\tilde\gamma_i\cap L, \tilde\gamma_j\cap L$ are straight lines passing through $B_L(\tilde{x},\kappa_1/2)$ so we can repeat the same construction as in the proof of Lemma \ref{lseparation}.
\end{proof}
To apply Lemma \ref{lseparation2} we will need a bit more technical version of Lemma \ref{lrepelling}. 
\begin{lemma}\label{lrepelling2}
Let $\gamma$ be a closed geodesic on $M$. Let $\kappa_1,\kappa_2$ be as in Lemma \ref{lseparation2}. Put 
\begin{align*}N'(\gamma)=&\{x\in M_{\geq R}\mid \exists\gamma' \textrm{ curve homotopic to } \gamma \textrm{ such that } d(x,\gamma')< \kappa_1/2,  \textrm{ the curve } \gamma'\cap B(x,2+\kappa_1) \\ &\textrm{ is geodesic and }\ell(\gamma')<\ell(\gamma)+\kappa_2/2\}.\end{align*}
Then $\Vol(N'(\gamma))\geq C_5 \ell^R(\gamma)R^{\frac{1}{2}}$ for some absolute positive constant $C_5$. 
\end{lemma}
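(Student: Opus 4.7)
The plan is to mimic the proof of Lemma \ref{lrepelling}, replacing only the ``one-kink'' bending of $\tilde\gamma$ at a single point $y$ by a ``two-kink'' splice that inserts a parallel straight-line detour through $y$. Since the detour is a Euclidean segment inside an apartment of $X$, it is automatically a building geodesic, so the perturbed curve is geodesic in a fixed-size ball around $y$ and the new local-geodesic requirement in the definition of $N'(\gamma)$ is satisfied essentially for free.

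Fix $T>2+\kappa_1$ independent of $\gamma$ and $R$ (concretely $T=10$). Choose a maximal $R$-separated set $\{x_1,\dots,x_N\}\subset\gamma\cap M_{>R}$ with $N\ge \ell^R(\gamma)/(2R)$, lift $\gamma$ to a geodesic line $\tilde\gamma\subset X$, pick lifts $\tilde x_i\in\tilde\gamma$, and fix an apartment $F$ of $X$ containing $\tilde\gamma$; recall $F$ is Euclidean of dimension $d=\dim X\ge 2$. Choose coordinates on $F$ with $\tilde\gamma=\R e_1$ and $\tilde x_i$ at the origin; for $y=(a_y,v_y)\in F$ put $a=(a_y-T,v_y)$, $b=(a_y+T,v_y)$, and let $\tilde\gamma'_y$ be obtained from $\tilde\gamma$ by replacing its segment between $\tilde x'_i=-L e_1$ and $\tilde x''_i=L e_1$ (where $L=R-\kappa_1/2-\kappa_2$) by the broken geodesic $[\tilde x'_i,a]\cup[a,b]\cup[b,\tilde x''_i]$. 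The middle leg $[a,b]$ has length $2T$, is parallel to $\tilde\gamma$, passes through $y$, and contains $B_X(x,2+\kappa_1)$ for every $x$ with $d(x,y)<\kappa_1/2$.

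The substitution $d(\tilde x'_i,a)=d(y,-(L-T)e_1)$, and symmetrically for $b$, rewrites the length-increase condition $\ell(\tilde\gamma'_y)<\ell(\tilde\gamma)+\kappa_2/2$ as the ellipsoid inequality $d(y,-(L-T)e_1)+d(y,(L-T)e_1)<2(L-T)+\kappa_2/2$. The resulting $E'_i\subset F$ has semi-major axis $\sim R$ and $d-1$ semi-minor axes $\sim\sqrt R$, hence $d$-volume $\gg R^{(d+1)/2}$, and its $\kappa_1/2$-neighborhood $H'_i\subset X$ has at least the same volume. The injectivity-radius bound $\injrad(x_i)\ge R$ makes the images $\iota(H'_i)\subset M$ pairwise disjoint exactly as in Lemma \ref{lrepelling}, so summing gives
\[
\Vol(N'(\gamma))\;\ge\;\sum_{i=1}^N\Vol(\iota(H'_i))\;\gg\;\frac{\ell^R(\gamma)}{R}\cdot R^{(d+1)/2}\;=\;\ell^R(\gamma)\,R^{(d-1)/2}\;\ge\;C_5\,\ell^R(\gamma)R^{1/2}
\]
using $d\ge 2$.

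The main obstacle, and the only genuinely new ingredient compared to Lemma \ref{lrepelling}, is to guarantee that $\gamma'_y\cap B_M(x,2+\kappa_1)$ is a \emph{single} geodesic segment, not a union of branches. I would handle this by further restricting to $y\in E'_i$ with $|v_y|\ge 3+2\kappa_1$, which removes from $E'_i$ only an axial cylinder of volume $O(R)$, negligible next to $R^{(d+1)/2}$; this ensures $\tilde\gamma$ itself and the kink points $a,b$ stay at distance $>2+\kappa_1$ from $\tilde x$, so $\tilde\gamma'_y$ meets $B_X(\tilde x,2+\kappa_1)$ solely along $[a,b]$. A final pruning discards from each $H'_i$ the set of $x$ at which some nontrivial $\Gamma$-translate of $\tilde\gamma$ enters $B_X(\tilde x,2+\kappa_1)$; this bad set lies in a tubular neighborhood of the near-self-intersection locus of $\gamma$ and has total volume $O(\ell(\gamma))$, which is absorbed into $C_5$ once $R$ is large enough.
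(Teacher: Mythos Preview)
Your core argument is correct and matches the paper's implicit approach (the paper simply says ``the proof is virtually the same construction as in Lemma~\ref{lrepelling} so we skip it''). The parallel-detour modification---replacing the one-kink path $[\tilde x'_i, y]\cup[y,\tilde x''_i]$ by $[\tilde x'_i, a]\cup[a,b]\cup[b,\tilde x''_i]$ with $[a,b]$ a Euclidean segment through $y$ of fixed length $2T>4+2\kappa_1$---is exactly the right idea to meet the local-geodesic requirement, and the ellipsoid estimate goes through with the foci shifted to $\pm(L-T)e_1$ and the same $R^{(d+1)/2}$ volume.

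One comment on your closing paragraph. The restriction $|v_y|\ge 3+2\kappa_1$ is harmless but not needed: the legs $[\tilde x'_i,a]$ and $[b,\tilde x''_i]$ already lie in the half-spaces where the first coordinate is at most $a_y-T+O(\kappa_2)$ (respectively at least $a_y+T-O(\kappa_2)$), so they never meet $B_X(\tilde x,2+\kappa_1)$ for $\tilde x\in B(y,\kappa_1/2)$ once $T$ is moderately large; and the deleted arc of $\tilde\gamma$ is not part of $\tilde\gamma'_y$ at all. More importantly, your ``final pruning'' step is both unnecessary and not quite justified as written. It is unnecessary because the clause ``$\gamma'\cap B(x,2+\kappa_1)$ is geodesic'' should be read---consistently with how it is actually used in the proof of Lemma~\ref{lseparation2}, which picks a single lift ``$\tilde\gamma_i$ which intersects $B_X(\tilde x,\kappa_1/2)$''---as asserting that the branch of $\gamma'$ passing within $\kappa_1/2$ of $x$ is geodesic in the larger ball; other passages of $\gamma'$ through that ball are irrelevant to the surgery. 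And the justification is shaky because the excised set has volume $O(\ell(\gamma))$, which need not be small compared to $\ell^R(\gamma)R^{1/2}$ (there is no a priori control on $\ell(\gamma)/\ell^R(\gamma)$), so it cannot simply be ``absorbed into $C_5$''. Drop that last step and your proof is clean.
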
 The proof is virtually the same construction as in Lemma \ref{lrepelling} so we skip it.
Proposition \ref{p.MTech} remains true for buildings. The proof in that case in even easier than for locally symmetric spaces because the quotients of buildings come with a structure of (poly)simplicial complexes and their injectivity radius is uniformly bounded from below.

We may now proceed to the proof of Theorem \ref{t.mNonarch}. Let $\alpha\in H_1(M,\F_2)$ and let $c\in Z_1(M,\F_2)$ be a representative of the minimal $R$-length. Write $c=\sum_{i=1}^k\gamma_k$ where $\gamma_k$ are closed geodesics. We claim that every point $x\in M_{>R}$ is contained in at most $C$ sets $N'(\gamma_i)$ for $i=1,\ldots, k$. Indeed, if is not the case we then up to change in the enumeration we can assume that $x\in N'(\gamma_{i})$ for $i=1,\ldots,m$. Let $\gamma'_{i}$ be a curve homotopic to $\gamma_{i}$  such that $d(x,\gamma'_i)< \kappa_1/2$,$\gamma'_i\cap B(x,2+\kappa_1)$ is geodesic and $\ell(\gamma'_i)<\ell(\gamma_i)+\kappa_2/2$.
By Lemma \ref{lseparation2} there are $i<j$ and a cycle $c_{i,j}\in Z_1(M,\F_2)$ such that $\ell(c_{i,j})\leq \ell(\gamma_i)+\ell(\gamma_j)-\kappa_2$ and $[c_{i,j}]= [\gamma_i+\gamma_j]$ in $H_1(M,\F_2)$. The cycle $c'=\sum_{l\neq i,j}\gamma_l+ c_{i,j}$ would be a representative of $\alpha$ with $\ell^R(c')<\ell^R(c)$. This contradicts the choice of $c$ and proves the claim.
It follows that 
\begin{align*}{C_4}{\Vol(M_{>R})}&\geq C_5 \sum_{i=1}^k R^{1/2}\ell^R(\gamma_i)=C_5\ell^R(c).\\
\frac{C_4\Vol(M_{>R})}{C_5R^{1/2}}&\geq \ell^R(c).
\end{align*} 
This proves Proposition \ref{plength} for the non-archimedean quotients with $C_2=C_5/C_4$. In a Benjamini-Schramm convergent sequence of quotients $\Gamma_n\bs X$ we have $\lim_{n\to\infty}\frac{\Vol((\Gamma_n\bs X)_{>R})}{\Vol(\Gamma_n\bs X)}=1$ so we can apply Proposition \ref{p.MTech} to get Theorem \ref{t.mNonarch}. 

With the non-archimedean version od Proposition \ref{plength} at our disposal the proof of the non-archimedean case of Theorem \ref{tmainL} is completely analogous to the proof in archimedean case. We just need to repeat the argument with $\mathcal N^1$ replaced by the $1$-skeleton of $\Gamma\bs X$.
\begin{remark} Most of our methods also apply to the quotients of products of symmetric spaces and Bruhat-Tits buildings by irreducible lattices. The only missing ingredient is the analogue of Gelander's construction of simplicial complexes homotopy equivalent to the thick part.  
\end{remark}
\bibliographystyle{plain}
\bibliography{../ref}
%% my chapter 1 content
%%
%% more of my chapter 1 content
\end{document}